\documentclass[11pt,twoside]{article}
\usepackage{amsmath, amssymb, amsfonts, amstext, amsthm, textcomp, enumerate}
\usepackage[mathscr]{euscript}
\usepackage{float}
\usepackage{booktabs}
\usepackage{mathtools}
\usepackage[left=20mm,top=0.5in,bottom=15mm]{geometry}
\usepackage{graphicx}
\usepackage{caption}
\usepackage{epstopdf}
\usepackage{longtable}
\usepackage[utf8]{inputenc}
\usepackage{color}
\usepackage{graphicx}
\usepackage{dcolumn}
\usepackage{bm}
\usepackage{epstopdf}
\usepackage[english]{babel}
\usepackage{subfigure}
\usepackage{color}

\usepackage{ulem}
\newtheorem{thm}{Theorem}[section]
\newtheorem{lem}[thm]{Lemma}
\newtheorem{defn}[thm]{Definition}

\newtheorem{rem}[thm]{Remark}
\newtheorem{ex}[thm]{Example}

\newcommand{\mnr}{\mathbf{M}_n\,(\mathbb{R})}
\newcommand{\mmn}{\mathbf{M}_{m,n}\,(\mathbb{R})}
\newcommand{\mr}{\mathbf{M}_{m,r}\,(\mathbb{R})}
\newcommand{\mnm}{\mathbf{M}_{n,m}\,(\mathbb{R})}

\newcommand{\mnnr}{\mathbf{M}_{n+1}\,(\mathbb{R})}

\newcommand{\mnrn}{\mathbf{M}_{r,n}\,(\mathbb{R})}

\renewcommand{\bar}{\overline}

\newfont{\bb}{msbm10}

\setlength{\oddsidemargin}{.75cm}
\setlength{\evensidemargin}{.5cm}
\setlength{\textwidth}{5.75in}
\setlength{\textheight}{8.75in}
\setlength{\parindent}{0pt}
\setlength{\parskip}{.1in}

\date{}

\begin{document}
\title{A short survey of $Z$-matrices and new results on the subclass of $F_0$-matrices}

\author{Samir Mondal\thanks{Department of Mathematics, Indian Institute of 
		Technology Madras,
		Chennai 600036, India (ma19d750@smail.iitm.ac.in, kcskumar@iitm.ac.in).} \ and K.C. Sivakumar$^*$}
  
\maketitle

------------------------------------------------------------------------------------------------------------------
This paper is dedicated, to Professor M.T. Nair on the occasion of his 65th
birthday. The authors wish him a long, healthy, and productive life.

------------------------------------------------------------------------------------------------------------------

	\begin{abstract}
		A real square matrix $A$ of order $n \times n~ (n \geq 3)$ is called an $F_0$-matrix, if it is a $Z$-matrix (off-diagonal entries nonpositive), all of whose principal submatrices of orders at most $n-2$ are $M$-matrices while there is at least one principal submatrix of order $n-1$, which is an $N_0$-matrix. An $M$-matrix is a $Z$-matrix  with the property that the real parts of all its eigenvalues are nonnegative. An $N_0$-matrix, in turn, is characterized by the fact that it is an invertible $Z$-matrix whose inverse is (entrywise) nonpositive. The first aim of this article is to present a short survey of some subclasses of $Z$-matrices, pertinent to the second objective, where new results concerning $F_0$-matrices are presented. 
	\end{abstract}
	
\vskip.25in
\textit{Keywords:} $Z$-matrix, $N_0$-matrix, $F_0$-matrix, Inverse $F_0$-matrix, Circulant matrix, Linear complementarity problem.

\textit{AMS Subject Classifications:}
		15A48, 
		15A09. 

\newpage
\section{Introduction}
Let us recall the notion of an $F_0$-matrix. Let $\mnr$ denote the space of all real square matrices of order $n \times n.$ A matrix $A \in \mnr$ is called a {\it $Z$-matrix}, if all its off-diagonal entries are nonpositive.

\begin{defn}\label{f_0defn}
Let $A\in \mnr$ be a $Z$-matrix. Then $A$ is called an $F_0$-matrix if it satisfies the following conditions:\\
(1) all principal submatrices of orders at most $n-2$ are $M$-matrices.\\
(2) at least one principal submatrix of order $n-1$ is an $N_0$-matrix.
\end{defn}

This concept was introduced by \cite{gjohn1}, and the nomenclature is due to \cite{smith2}, in honour of Fan. In this article, we present some new results for this matrix class. These will be presented after providing a short literature survey of relevant subclasses of $Z$-matrices.

\section{A survey of some subclasses of $Z$-matrices}
In what follows, we undertake a short survey of some subclasses of $Z$-matrices. 

For a matrix $A \in \mnr$, we use $R(A)$ to denote its range space, $N(A)$ for the null space and $A^T,$ for its transpose. The value $\rho(.)$ denotes the spectral radius, whereas $\rho_r(B)$ stands for the maximum of the spectral radii of all principal submatrices of $B$ of order $r$. We refer to a matrix $A \in \mnr$ as a nonnegative matrix if all its entries are nonnegative. This is denoted by $A \geq 0$. $A>0$ means that all the entries of $A$ are positive. This, in particular, applies to vectors. 
In what follows, we collect a set of matrix classes, pertinent to the discussion. These classes are denoted by a number of letters of the alphabet, sometimes also going with subscripts.

A matrix $A \in \mnr$ is called a {\it $Z$-matrix}, if all the off-diagonal entries of $A$ are nonpositive. Any $Z$-matrix $A$ may be represented by $A=sI-B$, where $s$ is a real number and $B \geq 0.$ In any such representation of $A$ as above, if $s\geq \rho(B),$ then $A$ is called an {\it $M$-matrix.} A $Z$-matrix written as $A=sI-B$ is called an {\it invertible $M$-matrix}, if $z > \rho(B).$ In such a case, it is well known that $A$ is invertible and $A^{-1} \geq 0$. There is a variety of results that characterize when a $Z$-matrix is an invertible $M$-matrix. The book \cite{berpl} has fifty such necessary and sufficient conditions and serves as an excellent resource for this matrix class. Let us recall one such result. A $Z$-matrix $A$ is an invertible $M$-matrix iff there exists $x >0$ such that $Ax >0.$ It follows from this, for instance that, if $B$ is an irreducible stochastic matrix (either all the column (row) sums equal one), and if $A$ is the $Z$-matrix given by $A=sI-B$ with $s>1$, then $A$ is an invertible $M$-matrix. On the other hand, if $A=I-B,$ then $A$ is a singular $M$-matrix. This is due to the fact that $1$ is a simple eigenvalue of $B$ (by the Perron-Frobenius theorem) and that $e,$ the "all entries one" vector (whose dimension will be clear from the context), is an eigenvector of $B$, so that $e$ is a nonzero vector in $N(A)$. 

For $A \in \mathbb{C}^{n \times n}$, let ${\cal M}_A$ denote the {\it comparison matrix} defined by 
\begin{equation}
 ({\cal M}_A)_{ij} = \begin{cases}
  ~~|a_{ii}|, & ~{\textit if} ~i = j,\\
  -|a_{ij}|, & ~{\textit if}~i\neq j.
  \end{cases} \\
\end{equation}
Note that ${\cal M}_A$ is a $Z$-matrix. $A$ is called an invertible $H$-matrix, if ${\cal M}_A$ is an invertible $M$-matrix. Invertible $H$-matrices have been extensively studied in the literature. A celebrated result for invertible $H$-matrices is the following, proved in \cite{ost}. 

\begin{thm}\label{ostr}
Let $A$ be an invertible $H$-matrix. Then the following coordinate wise inequality holds:
$$|A^{-1}| \leq {{\cal M}_A}^{-1}.$$
\end{thm}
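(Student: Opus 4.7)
The plan is to prove the inequality by expanding both $A^{-1}$ and $\mathcal{M}_A^{-1}$ as Neumann (geometric) series and comparing them term by term.

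First, I would decompose $A = D - C$, where $D = \mathrm{diag}(a_{11}, \ldots, a_{nn})$ and $C$ has zero diagonal and $C_{ij} = -a_{ij}$ for $i \neq j$. Correspondingly, $\mathcal{M}_A = |D| - |C|$, where $|D| = \mathrm{diag}(|a_{11}|, \ldots, |a_{nn}|)$ and $|C|_{ij} = |a_{ij}|$ off the diagonal (zero on it). Since $\mathcal{M}_A$ is an invertible $M$-matrix by hypothesis, each $|a_{ii}| > 0$ (so $D$ and $|D|$ are both invertible) and the nonnegative matrix $K := |D|^{-1}|C|$ satisfies $\rho(K) < 1$. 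Thus we have the convergent expansion
\begin{equation*}
\mathcal{M}_A^{-1} = (I - K)^{-1}|D|^{-1} = \sum_{k=0}^{\infty} K^k |D|^{-1}.
\end{equation*}

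Next, I would set $L := D^{-1}C$, so that $A = D(I - L)$. Because $D$ is diagonal, one checks entrywise that $|L| = |D^{-1}C| = |D|^{-1}|C| = K$. Using the standard fact that $\rho(L) \leq \rho(|L|) = \rho(K) < 1$, the matrix $I - L$ is invertible and
\begin{equation*}
A^{-1} = (I - L)^{-1}D^{-1} = \sum_{k=0}^{\infty} L^k D^{-1}.
\end{equation*}

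Finally, I would compare the two series. Applying the triangle inequality entrywise, together with $|L^k| \leq |L|^k = K^k$ and $|D^{-1}| = |D|^{-1}$, yields
\begin{equation*}
|A^{-1}| = \Bigl|\sum_{k=0}^{\infty} L^k D^{-1}\Bigr| \leq \sum_{k=0}^{\infty} |L^k|\,|D^{-1}| \leq \sum_{k=0}^{\infty} K^k |D|^{-1} = \mathcal{M}_A^{-1},
\end{equation*}
which is the desired inequality. The main technical point to be careful about is the inequality $\rho(L) \leq \rho(|L|)$, which guarantees both the invertibility of $A$ and the validity of the Neumann expansion; this is standard but worth citing from the Perron--Frobenius framework. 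Everything else is bookkeeping on the two geometric series.
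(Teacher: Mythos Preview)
Your argument is correct and is in fact the classical Neumann-series proof of Ostrowski's inequality. Note, however, that the paper does not give its own proof of this statement: Theorem~\ref{ostr} is quoted from \cite{ost} as a known result, with no proof supplied. So there is nothing to compare against; your write-up would serve as a self-contained proof where the paper simply cites the literature.

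One small remark: you should state explicitly why the diagonal entries of an invertible $M$-matrix are strictly positive (e.g., because each $1\times 1$ principal minor of an invertible $M$-matrix is positive, or because $sI-B$ with $s>\rho(B)\geq b_{ii}$ forces $s-b_{ii}>0$). Everything else---the factorization $\mathcal{M}_A=|D|(I-K)$, the bound $\rho(L)\leq\rho(|L|)$ from Perron--Frobenius theory, and the termwise comparison of the two geometric series---is standard and correctly invoked.
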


The book \cite{neu} is a good source for results on invertible $H$-matrices. We refer the reader to \cite{samirtsatkcs} for a number of recent results on invertible $H$-matrices, which in turn, are motivated by those of $M$-matrices.  

A nonnegative and nonsingular matrix is called an {\it inverse $M$-matrix}, if its inverse is an $M$-matrix. Note that, since an invertible $M$-matrix has the property that its inverse is nonnegative, we need this requirement of nonnegativity, for a matrix to qualify to be an inverse $M$-matrix. We refer the reader to \cite{john}, \cite{Johnsmith} and the monograph \cite{jst} for more details on inverse $M$-matrices. We refer the reader also to the recent work \cite{manamiinterval}.

If a matrix $A$ is such that $A-I$ is an invertible $M$-matrix, then $A$ is invertible, and both $A$ and $I-A^{-1}$ are invertible $M$-matrices. This result is due to Fan. This, as well as its converse were proved in \cite[Theorem 5.1]{manamiinequalities} and \cite[Proposition 5.1]{kalkcs1}. Verbatim analogues of this result for singular $M$-matrices were proved in \cite{kalkcs1}, while the case of inverse $M$-matrices was studied in \cite[Theorem 2.1]{kalkcs2}.
Results of this type for $H$-matrices are proved in \cite{manamiinequalities}. 

Next, let $A \in \mnr$ be a $Z$-matrix given by $A=sI-B$, with $B\geq 0$. Let $\rho_{n-1}(B)$ denote the maximum of the spectral radii of the principal submatrices of $B$ of order $(n-1) \times (n-1)$. If $s$ satisfies the inequalities $\rho_{n-1}(B) < s < \rho(B),$ then $A$ is referred to as an {\it $N$-matrix}. This matrix class was introduced and investigated in \cite{fan}. For instance, it is shown that an $N$-matrix is precisely a $Z$-matrix with the property that its determinant is negative, with each proper principal submatrix being an  invertible $M$-matrix. Let us recall that a proper principal submatrix of a matrix is any principal submatrix other than the matrix itself. In \cite{john}, a procedure of constructing an $N$-matrix, whose leading maximal principal submatrix being an invertible $M$-matrix, was presented. Let us include it here, as part of this exposition. Let $A$ be an invertible $M$-matrix so that $A^{-1} \geq 0$. Set $q:=e^TA^{-1}e$, the sum of all the entries of $A^{-1}$, so that $q>0.$ Set $t$ to be the minimum of all the column sums of $A^{-1}$ so that $q>t$. Further, set $\gamma=-\frac{1}{q-t}$ and $\delta=-\frac{1}{q},$ so that $\gamma < \delta <0.$ Finally, choose $\alpha \in (\gamma, \delta).$ If 
$$M=\begin{pmatrix}
A & -e\\
\alpha e^t & 1
\end{pmatrix},$$
then $M$ is an invertible $Z$-matrix, ($M^{-1} <0$) and $M$ is an $N$-matrix. For more details, we refer the reader to the discussion after Theorem 2.10, \cite{john}.

Next, let us consider an extension of  $N$-matrices. Let $A \in \mnr$ be a $Z$-matrix given by $A=sI-B$, with $B\geq 0$. If $s$ satisfies the inequalities $\rho_{n-1}(B) \leq  s < \rho(B),$ then $A$ is referred to as an {\it $N_0$-matrix}.  This matrix class was proposed, and several interesting properties were proved, in \cite{gjohn1}. Note that every $N$-matrix is an $N_0$-matrix and that the latter matrix class is the topological closure of the former. Let us include an interesting extension of the property of $N$-matrices, mentioned earlier, to the class of $N_0$-matrices. A matrix $A$ is an $N_0$-matrix precisely if $A$ is a $Z$ matrix with the property that all proper principal submatrices are (not necessarily invertible) $M$-matrices, with the determinant of $A$ being negative \cite[Lemma 2.1]{gjohn1}. Here is another result: If $A$ is a $Z$-matrix, then $A$ is an $N_0$-matrix iff $A^{-1} \leq 0$ and is irreducible \cite[Theorem 2.7]{gjohn1}. Note that a nonsingular matrix is irreducible iff its inverse is irreducible. The following result concerning the spectrum of an $N_0$-matrix is also interesting. Let $A$ be an $N_0$-matrix. Then $A$ has exactly one negative eigenvalue \cite[Theorem 2.1]{john}. For further results, we refer the reader to \cite{smith1}.

Let us turn our attention to the inverse class of $N_0$-matrices. A nonpositive nonsingular matrix $A$ is called an {\it inverse $N_0$-matrix}, if $A^{-1}$ is an $N_0$-matrix. This matrix class was introduced in \cite{gjohn2} and some characterizations were proved. We shall be interested in a sufficient condition. In order to state this result, let us recall the following: Let real numbers $a_1,a_2, \ldots, a_n$ be such that $a_n > a_{n-1} > \ldots > a_1.$ A matrix $A:=(a_{ij})$ is called a {\it matrix of type $D$}, if 
\begin{equation}
 a_{ij} = \begin{cases}
  a_i, & ~{\textit if} ~i \leq j,\\
  a_j, & {\textit{otherwise}}.
  \end{cases} \\
\end{equation}

A matrix of type $D$ of order $n \times n$ will be denoted by $D_n.$ For instance, 
  $$D_4=
  \begin{pmatrix}
   a_1 & a_1 & a_1 & a_1\\
   a_1 & a_2 & a_2 & a_2\\
   a_1 & a_2 & a_3 & a_3\\
   a_1 & a_2 & a_3 & a_4
  \end{pmatrix}.$$
We also have the following recurrence relation:
$$D_{n+1}=
  \begin{pmatrix}
      D_n & b \\
      b^T & a_{n+1}
  \end{pmatrix},$$ where $b=(a_1,a_2, \ldots, a_n)^T.$ Type $D$ matrices were introduced in \cite{markham}, where the following result was shown. 
  
\begin{thm}\cite[Theorem 2.3]{markham}
Let $A$ be a type $D$-matrix with $a_1 >0.$ Then $A$ is an inverse $M$-matrix with $A^{-1}$ being tridiagonal.  
\end{thm}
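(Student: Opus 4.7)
The plan is to induct on $n$ using the block recurrence
$$D_{n+1} = \begin{pmatrix} D_n & b \\ b^T & a_{n+1} \end{pmatrix}, \qquad b = (a_1, a_2, \ldots, a_n)^T,$$
and the Schur complement formula for block inversion. The base case $n = 1$ is immediate since $D_1 = (a_1)$ is a positive scalar, and $D_2$ is readily verified by direct computation to have a tridiagonal inverse with positive diagonal entries $a_2/(a_1(a_2-a_1))$, $1/(a_2-a_1)$ and negative off-diagonal entries $-1/(a_2-a_1)$.

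The key observation that drives the induction is that the last column of $D_n$ is exactly $(a_1, a_2, \ldots, a_n)^T = b$, since $(D_n)_{i,n} = a_{\min(i,n)} = a_i$ for $i \leq n$. Consequently $D_n^{-1} b = e_n$, the $n$-th standard basis vector. This makes the Schur complement collapse to $a_{n+1} - b^T D_n^{-1} b = a_{n+1} - b^T e_n = a_{n+1} - a_n > 0$. Applying the standard block inversion formula then gives
$$D_{n+1}^{-1} = \begin{pmatrix} D_n^{-1} + \frac{1}{a_{n+1}-a_n}\, e_n e_n^T & -\frac{1}{a_{n+1}-a_n}\, e_n \\[2pt] -\frac{1}{a_{n+1}-a_n}\, e_n^T & \frac{1}{a_{n+1}-a_n} \end{pmatrix}.$$
The off-diagonal blocks are supported only in the last entry (contributing to positions $(n, n+1)$ and $(n+1, n)$), and the top-left block equals $D_n^{-1}$ modified only at position $(n,n)$. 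Thus if $D_n^{-1}$ is tridiagonal (induction hypothesis), so is $D_{n+1}^{-1}$. Along the way one reads off that every diagonal entry of $D_{n+1}^{-1}$ is positive (a sum of positive terms from the two adjacent Schur steps), while the sub- and super-diagonal entries are of the form $-1/(a_{k+1} - a_k) < 0$. Hence $A^{-1}$ is a $Z$-matrix with positive diagonal and tridiagonal support.

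To finish, I invoke the definition of inverse $M$-matrix. Since $a_1 > 0$ and the sequence is strictly increasing, $A \geq 0$ (in fact entrywise positive). Having shown $A^{-1}$ is a $Z$-matrix whose inverse $(A^{-1})^{-1} = A$ is nonnegative, the standard characterization of invertible $M$-matrices (a $Z$-matrix is an invertible $M$-matrix iff it is invertible with nonnegative inverse) yields that $A^{-1}$ is an invertible $M$-matrix. Therefore $A$ is an inverse $M$-matrix whose inverse is tridiagonal, as claimed.

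The only step requiring real care is verifying that the Schur-complement manipulation truly preserves tridiagonality through the induction; this ultimately rests on the identity $D_n^{-1} b = e_n$, so the whole argument hinges on noticing that $b$ coincides with the last column of $D_n$. Everything else is bookkeeping.
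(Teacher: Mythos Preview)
Your argument is correct. The identity $D_n^{-1}b=e_n$, which you isolate as the crux, is exactly right: since the last column of $D_n$ is $b$, one has $D_n e_n=b$, and the Schur complement collapses to $a_{n+1}-a_n>0$. The block-inversion formula then forces the inverse to grow tridiagonally, with the sign pattern you describe, and the $M$-matrix conclusion follows from the standard characterization (a $Z$-matrix with nonnegative inverse is an invertible $M$-matrix), which the paper also records.

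There is, however, nothing in the present paper to compare against: this theorem is quoted from Markham \cite{markham} and stated without proof here, serving only as background for the analogous result on inverse $N_0$-matrices and for the later discussion of type $D$ matrices in Section~4.4. Markham's original 1972 argument proceeds along essentially the same lines as yours---an explicit computation of $D_n^{-1}$ via the bordered structure---so your approach is the natural and expected one.
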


An analogue for inverse $N_0$-matrices was also proved.

\begin{thm} \cite[Theorem 2.4]{gjohn2}
Let $A$ be a type $D$-matrix with $a_n <0.$ Then $A$ is an  inverse $N_0$-matrix such that $A^{-1}$ is tridiagonal.
\end{thm}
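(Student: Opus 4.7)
The plan is to verify the three conditions in the definition of an inverse $N_0$-matrix for the given type $D$-matrix $A$: (a) $A$ is nonpositive; (b) $A$ is nonsingular; (c) $A^{-1}$ is an $N_0$-matrix, and to establish tridiagonality along the way. Condition (a) is immediate, since $a_n<0$ together with the strict ordering $a_1<a_2<\cdots<a_n$ forces every $a_i<0$, so every entry $a_{\min(i,j)}$ of $A$ is negative.

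For (b) and the tridiagonality claim, I would exhibit $A^{-1}$ in closed form via an $LDL^T$ factorization. Setting $d_1:=a_1$ and $d_k:=a_k-a_{k-1}$ for $k\geq 2$, telescoping yields $a_k=d_1+d_2+\cdots+d_k$, and hence $a_{\min(i,j)}=\sum_{k=1}^{\min(i,j)} d_k$. With $L$ the lower triangular matrix having $L_{ij}=1$ when $i\geq j$ and $0$ otherwise, and $D=\mathrm{diag}(d_1,\ldots,d_n)$, this identity is precisely $A=LDL^T$. Since $d_1=a_1\neq 0$ and $d_k>0$ for $k\geq 2$, both $L$ and $D$ are invertible, so $A$ is nonsingular and $A^{-1}=L^{-T}D^{-1}L^{-1}$. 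Because $L^{-1}$ is lower bidiagonal with $1$'s on the main diagonal and $-1$'s on the first subdiagonal, a direct multiplication shows that $A^{-1}$ is tridiagonal with off-diagonal entries $(A^{-1})_{i,i+1}=(A^{-1})_{i+1,i}=-1/d_{i+1}$ and diagonal entries $1/d_i+1/d_{i+1}$ for $1\leq i<n$ and $1/d_n$ at the $(n,n)$ position.

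To complete (c), I would appeal to the characterization of $N_0$-matrices from \cite[Theorem 2.7]{gjohn1} recalled earlier: a $Z$-matrix $B$ is $N_0$ iff $B^{-1}\leq 0$ and is irreducible. Taking $B=A^{-1}$, the nonzero off-diagonal entries $-1/d_{i+1}$ are negative since $d_{i+1}>0$, so $A^{-1}$ is a $Z$-matrix; moreover $B^{-1}=A$ is entrywise negative, hence nonpositive, and has no zero entries, so its directed graph is complete and in particular strongly connected, which makes $A$ irreducible. Therefore $A^{-1}$ is an $N_0$-matrix, and combining (a)--(c) we conclude that $A$ is an inverse $N_0$-matrix whose inverse is tridiagonal.

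The step requiring the most care is the bookkeeping in the block product $L^{-T}D^{-1}L^{-1}$. An alternative route would be induction on $n$ using the recurrence for $D_{n+1}$ together with a Schur complement computation, but the $LDL^T$ factorization delivers both the tridiagonal pattern of $A^{-1}$ and the precise sign information in a single, transparent calculation, and also makes it natural to recover the sister result \cite[Theorem 2.3]{markham} (the case $a_1>0$) by a parallel argument.
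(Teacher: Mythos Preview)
Your argument is correct. The $LDL^T$ factorization with $L$ the lower-triangular all-ones matrix and $D=\mathrm{diag}(a_1,a_2-a_1,\ldots,a_n-a_{n-1})$ gives the tridiagonal form of $A^{-1}$ cleanly, the sign bookkeeping is right (in particular $d_1=a_1<0$ and $d_k>0$ for $k\ge 2$, so the off-diagonal entries $-1/d_{i+1}$ are negative and $A^{-1}$ is a $Z$-matrix), and the appeal to \cite[Theorem~2.7]{gjohn1} is legitimate since $A$ is entrywise negative and therefore irreducible.

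Note, however, that the present paper does not supply its own proof of this statement: it is quoted from \cite[Theorem~2.4]{gjohn2} as background in the survey section, so there is no in-paper argument to compare against. The original proof in \cite{gjohn2} proceeds somewhat differently, working from the total-nonpositivity result for type~$D$ matrices with $a_n<0$ (recalled here as Theorem~\ref{totallynonpositive}) rather than via an explicit factorization. Your $LDL^T$ route is more direct and has the advantage of producing the exact entries of $A^{-1}$, which simultaneously delivers tridiagonality, the $Z$-pattern, and the data needed to invoke the $N_0$ characterization; the trade-off is that it does not by itself yield the stronger total-nonpositivity statement that the paper later exploits in Theorem~\ref{dn+1}.
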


Let us turn our attention to the main object of investigation, viz., $F_0$-matrices (see Definition \ref{f_0defn}). Let's recall a characterization of $F_0$-matrices.

\begin{thm}\cite[Theorem 2.3]{chen}
 The matrix $A$ is an $F_0$-matrix if and only if $A=tI-B,$ with $B\geq 0$ and $\rho_{n-2}(B)\leq t < \rho_{n-1}(B)$ for $n\geq 3.$ 
\end{thm}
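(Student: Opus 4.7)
The plan is to reduce both implications to two elementary facts. First, for a $Z$-matrix $C$ of order $k$ written as $C = tI_k - D$ with $D \geq 0$, the survey tells us that $C$ is an $M$-matrix iff $t \geq \rho(D)$, and (when $k \geq 2$) $C$ is an $N_0$-matrix iff $\rho_{k-1}(D) \leq t < \rho(D)$. Second, Perron--Frobenius monotonicity on nonnegative matrices gives the chain
\[
\rho_1(B) \leq \rho_2(B) \leq \cdots \leq \rho_{n-1}(B) \leq \rho_n(B) = \rho(B),
\]
because every $k \times k$ principal submatrix of $B$ sits inside a $(k+1) \times (k+1)$ principal submatrix, and the spectral radius of a nonnegative principal submatrix is no larger than that of the enveloping nonnegative matrix. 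I would fix any representation $A = tI - B$ with $B \geq 0$ (say, with $t$ at least the largest diagonal entry of $A$), and note that for a principal submatrix $A_k$ of $A$ with corresponding submatrix $B_k$ of $B$, one has $A_k = tI_k - B_k$ and $B_k \geq 0$.

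For the forward direction, suppose $A$ is an $F_0$-matrix. Condition (1) of Definition \ref{f_0defn} says every $A_k$ with $k \leq n-2$ is an $M$-matrix, i.e.\ $t \geq \rho(B_k)$ for every such $B_k$; taking the maximum over all principal submatrices of order at most $n-2$ and invoking the monotonicity chain, this collapses to $t \geq \rho_{n-2}(B)$. Condition (2) supplies some principal submatrix $A_{n-1}$ of order $n-1$ which is $N_0$, so the shift characterization forces $t < \rho(B_{n-1})$, and since $B_{n-1}$ is one of the $(n-1) \times (n-1)$ principal submatrices of $B$, this yields $t < \rho_{n-1}(B)$.

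For the reverse direction, assume $A = tI - B$ with $B \geq 0$ and $\rho_{n-2}(B) \leq t < \rho_{n-1}(B)$. Clearly $A$ is a $Z$-matrix. For any principal submatrix $A_k$ of order $k \leq n-2$, $\rho(B_k) \leq \rho_{n-2}(B) \leq t$, so $A_k$ is an $M$-matrix. To produce an $N_0$ submatrix of order $n-1$, pick $B_{n-1}$ attaining $\rho_{n-1}(B)$; then the corresponding $A_{n-1}$ satisfies
\[
\rho_{(n-1)-1}(B_{n-1}) \leq \rho_{n-2}(B) \leq t < \rho(B_{n-1}) = \rho_{n-1}(B),
\]
so $A_{n-1}$ is an $N_0$-matrix by the shift characterization. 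The one point requiring care is the inequality $\rho_{n-2}(B_{n-1}) \leq \rho_{n-2}(B)$ used here, but it is immediate because every $(n-2) \times (n-2)$ principal submatrix of $B_{n-1}$ is also a principal submatrix of $B$. Beyond that, both directions are essentially a matter of unwinding the definitions through the shift representation.
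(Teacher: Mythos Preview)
The paper does not give its own proof of this statement; it is quoted verbatim from \cite{chen} as part of the survey and left unproved. So there is nothing to compare your argument against from within the paper itself.

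That said, your proof is correct and is essentially the natural one. You use precisely the two shift characterizations already recorded in the survey (the $M$-matrix condition $t\geq\rho(D)$ and the $N_0$-condition $\rho_{k-1}(D)\leq t<\rho(D)$), together with the Perron--Frobenius monotonicity $\rho_k(B)\leq\rho_{k+1}(B)$ for nonnegative $B$. Both directions unwind exactly as you describe, and the one genuinely nontrivial step in the reverse implication---that the $(n-1)\times(n-1)$ block $B_{n-1}$ attaining $\rho_{n-1}(B)$ has $\rho_{n-2}(B_{n-1})\leq\rho_{n-2}(B)$---you justify correctly by inclusion of principal submatrices. One small remark: since the representation $A=tI-B$ with $B\geq 0$ is not unique, it is worth noting (as a sanity check) that the interval condition $\rho_{n-2}(B)\leq t<\rho_{n-1}(B)$ is invariant under the shift $(t,B)\mapsto(t+s,B+sI)$ for $s\geq 0$, because $\rho_k(B+sI)=\rho_k(B)+s$; this confirms that your choice to ``fix any representation'' in the forward direction is harmless.
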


Let $M\in\mnnr$ be an $F_0$-matrix. Then, one may assume, without loss of generality (by permuting the rows and columns of $M$, if necessary) that $M$ may be written as $$M=\begin{pmatrix}
A & b\\
c^T & d
\end{pmatrix},$$ where $A\in \mnr$ is an $N_0$-matrix, $b, c \in \mathbb{R}^n$ are nonpositive and $d \in \mathbb{R}$ is nonnegative. We will be making frequent use of this representation.

Here is an interesting result for this matrix class.

\begin{thm} \cite[Theorem 3.1]{john} \label{f0mot}
Let $A$ be a nonsingular $F_0$-matrix. Then $\rm det(A) <0$ and $A^{-1}$ is a $Z$-matrix with at least one positive diagonal entry. 
\end{thm}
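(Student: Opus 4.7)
The plan is to establish the two assertions of the theorem separately, working throughout with the block representation supplied by the survey: after a permutation similarity,
\begin{equation*}
A = \begin{pmatrix} B & b \\ c^T & d \end{pmatrix},
\end{equation*}
where $B$ is an $N_0$-matrix of order $n-1$, $b, c$ are nonpositive column vectors, and $d \geq 0$. For the sign of $\det(A)$, the Schur complement identity gives $\det(A) = \det(B)(d - c^T B^{-1} b)$. Since $B$ is $N_0$, $\det(B) < 0$ and $B^{-1} \leq 0$; thus $B^{-1}b \geq 0$, $c^T B^{-1} b \leq 0$, and $d - c^T B^{-1} b \geq d \geq 0$. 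Nonsingularity of $A$ upgrades this to a strict positive, so $\det(A) = (\text{neg.})(\text{pos.}) < 0$.

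For the claims about $A^{-1}$, I use the cofactor formulae. The diagonal entries satisfy $(A^{-1})_{kk} = \det(A[I\setminus\{k\}])/\det(A)$; by definition of $F_0$ there exists $k$ for which $A[I\setminus\{k\}]$ is $N_0$ with negative determinant, hence $(A^{-1})_{kk}>0$. For off-diagonal entries $i \neq j$, the cofactor formula reads $(A^{-1})_{ij} = (-1)^{i+j}\det(A^{(j,i)})/\det(A)$ where $A^{(j,i)}$ is $A$ with row $j$ and column $i$ deleted; since $\det(A)<0$, it suffices to show $(-1)^{i+j}\det(A^{(j,i)}) \geq 0$. I would prove this by reorganising the Leibniz expansion of $\det(A^{(j,i)})$ according to the cycle containing both $i$ and $j$: each bijection contributing to the expansion extends uniquely to a permutation $\sigma$ of $\{1,\ldots,n\}$ with $\sigma(j)=i$, which forces $i$ and $j$ to lie in a common cycle. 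That cycle corresponds to a path $P=(i=p_0,p_1,\ldots,p_{l-1}=j)$ of length $l \geq 2$ in the complete graph on $\{1,\ldots,n\}$; the remaining part of $\sigma$ is an arbitrary permutation of $T_P := \{1,\ldots,n\}\setminus\{p_0,\ldots,p_{l-1}\}$. Combining the sign identity $\operatorname{sgn}(\pi) = (-1)^{i+j}\operatorname{sgn}(\sigma)$ for the bijection $\pi$ in the Leibniz expansion of $\det(A^{(j,i)})$ with the fact that a cycle of length $l$ has sign $(-1)^{l-1}$, the summation yields
\begin{equation*}
(-1)^{i+j}\det(A^{(j,i)}) \;=\; \sum_{P}|\epsilon_P|\,\det(A[T_P]),\qquad \epsilon_P := A_{p_0 p_1}A_{p_1 p_2}\cdots A_{p_{l-2}p_{l-1}},
\end{equation*}
where $|\epsilon_P|=(-1)^{l-1}\epsilon_P \geq 0$ because each factor of $\epsilon_P$ is an off-diagonal entry of the $Z$-matrix $A$. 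Since $|T_P|=n-l\leq n-2$, the $F_0$-hypothesis makes $A[T_P]$ an $M$-matrix, hence $\det(A[T_P])\geq 0$. Every summand is nonnegative, proving $(-1)^{i+j}\det(A^{(j,i)})\geq 0$ and therefore $(A^{-1})_{ij}\leq 0$.

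The main obstacle is the combinatorial sign bookkeeping in the path expansion: one has to verify carefully the identity $\operatorname{sgn}(\pi) = (-1)^{i+j}\operatorname{sgn}(\sigma)$ between the bijection in the Leibniz formula and its extension with $\sigma(j) = i$, and confirm that the $(-1)^{l-1}$ coming from the cycle's sign cancels exactly with the $(-1)^{l-1}$ embedded in the product of $l-1$ nonpositive off-diagonal entries of $A$ along the path $P$. Once this algebra is in place, the $F_0$-hypothesis plays its clean role: every principal submatrix of order at most $n-2$ is an $M$-matrix with nonnegative determinant, which makes every term in the expansion nonnegative and gives the $Z$-matrix property of $A^{-1}$ at no additional cost.
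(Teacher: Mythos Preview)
The paper does not provide a proof of this theorem; it is quoted verbatim from \cite[Theorem~3.1]{john} and used as a known result, so there is no in-paper argument to compare against.

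That said, your proof is correct and self-contained. The Schur-complement step for $\det(A)<0$ is clean: $B^{-1}\leq 0$, $b,c\leq 0$ give $c^TB^{-1}b\leq 0$, and nonsingularity forces the Schur complement to be strictly positive. The positive diagonal entry of $A^{-1}$ follows directly from the $F_0$ definition via the cofactor formula. For the off-diagonal entries, the path expansion you describe is exactly the identity
\[
C_{ji}=\sum_{\sigma:\,\sigma(j)=i}\operatorname{sgn}(\sigma)\prod_{k\neq j}A_{k\sigma(k)}
=\sum_{P}(-1)^{l-1}\epsilon_P\,\det\bigl(A[T_P]\bigr),
\]
obtained by grouping permutations according to the cycle through $j$ (which must contain $i$) and summing the residual permutation freely over $T_P$. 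Your sign bookkeeping is right: the $l$-cycle contributes $(-1)^{l-1}$ to $\operatorname{sgn}(\sigma)$, and the $l-1$ nonpositive off-diagonal factors in $\epsilon_P$ contribute another $(-1)^{l-1}$, so $(-1)^{l-1}\epsilon_P=|\epsilon_P|$. Since $|T_P|=n-l\leq n-2$, the $F_0$ hypothesis makes every $\det(A[T_P])\geq 0$ (with the convention $\det(A[\emptyset])=1$ when $l=n$), and the $Z$-property of $A^{-1}$ follows. One minor remark: the relation $\operatorname{sgn}(\pi)=(-1)^{i+j}\operatorname{sgn}(\sigma)$ you flag as the ``main obstacle'' is nothing more than the cofactor identity $C_{ji}=(-1)^{i+j}M_{ji}$ itself, so working directly with $C_{ji}$ as above sidesteps that verification entirely.
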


The converse question was considered in \cite{chen}.

\begin{thm}\cite[Theorem 2.4]{chen}\label{charf0}
Suppose that $M$ is a nonsingular $Z$-matrix. Then $M\in F_0$ if and only if \\
$(1)$ ${\rm det}(M) <0,$\\
$(2)$ all principal minors of $M^{-1}$ of order at least two are nonpositive,\\
$(3)$ $M^{-1}$ has at least one positive diagonal entry.
\end{thm}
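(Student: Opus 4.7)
The central tool will be the Jacobi identity for principal minors of the inverse: for any nonsingular $M$ and any index set $\alpha \subseteq \{1,\ldots,n\}$,
\begin{equation}\label{eq:jacplan}
\det\bigl(M^{-1}[\alpha]\bigr) \;=\; \frac{\det\bigl(M[\alpha^c]\bigr)}{\det(M)},
\end{equation}
where $M[\beta]$ denotes the principal submatrix indexed by $\beta$, with the convention $\det(M[\emptyset]):=1$. I would open the proof by recording \eqref{eq:jacplan} and then deploy it in both directions.

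For the forward implication, assume $M$ is a nonsingular $F_0$-matrix. Theorem \ref{f0mot} immediately supplies (1) and (3). For (2), fix any $\alpha$ with $|\alpha| \ge 2$; then $|\alpha^c| \le n-2$, so $M[\alpha^c]$ is an $M$-matrix by Definition \ref{f_0defn}, whence $\det(M[\alpha^c]) \ge 0$. Combining this with $\det(M)<0$ in \eqref{eq:jacplan} yields $\det(M^{-1}[\alpha]) \le 0$, which is precisely (2). (Note that the cases $|\alpha|=n-1$ and $|\alpha|=n$ are handled automatically since diagonal entries $m_{ii}$ are themselves $1\times 1$ $M$-matrices, hence nonnegative.)

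For the converse, suppose $M$ is a nonsingular $Z$-matrix satisfying (1)--(3). Rewriting \eqref{eq:jacplan} in the form $\det(M[\alpha]) = \det(M)\,\det(M^{-1}[\alpha^c])$, I first take $\alpha$ with $|\alpha| \le n-2$: then $|\alpha^c| \ge 2$, so (2) gives $\det(M^{-1}[\alpha^c]) \le 0$, and together with (1) this forces $\det(M[\alpha]) \ge 0$. Because every principal submatrix of $M[\alpha]$ is itself a principal submatrix of $M$ of order $\le n-2$, all principal minors of $M[\alpha]$ are nonnegative; being a $Z$-matrix, $M[\alpha]$ is therefore an $M$-matrix by the standard principal-minor characterization in \cite{berpl}. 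This yields the first clause of Definition \ref{f_0defn}. To produce an $N_0$-submatrix of order $n-1$, I pick an index $i$ with $(M^{-1})_{ii}>0$ (guaranteed by (3)) and set $\alpha=\{1,\ldots,n\}\setminus\{i\}$. Then \eqref{eq:jacplan} gives $\det(M[\alpha]) = \det(M)\,(M^{-1})_{ii} < 0$, while every proper principal submatrix of $M[\alpha]$ is an $M$-matrix by the preceding step. The $N_0$-characterization recalled in \cite[Lemma 2.1]{gjohn1} then identifies $M[\alpha]$ as an $N_0$-matrix, giving the second clause of Definition \ref{f_0defn}.

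The only genuine obstacle is careful bookkeeping: one must be sure that $|\alpha|$ ranges over the correct values ($\ge 2$ in the forward direction, $\le n-2$ in the converse), that the empty and singleton cases of $\alpha^c$ are accommodated, and that the sign of $\det(M)<0$ flips the inequalities in exactly one spot. No deeper ingredient is required beyond Jacobi's identity and the two standard principal-minor characterizations of $M$- and $N_0$-matrices.
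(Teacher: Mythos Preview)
The paper does not supply its own proof of this statement; it is quoted verbatim as \cite[Theorem 2.4]{chen} and used later without argument. Your proof via Jacobi's complementary-minor identity is correct and is the natural route: the identity $\det(M^{-1}[\alpha])\det(M)=\det(M[\alpha^c])$ converts conditions (1)--(3) on $M^{-1}$ into exactly the principal-minor sign pattern required by Definition \ref{f_0defn}, and the standard characterizations of $M$-matrices (all principal minors nonnegative, \cite{berpl}) and $N_0$-matrices (\cite[Lemma 2.1]{gjohn1}) finish both directions. The bookkeeping you flag---the empty and singleton complements, and the fact that the $1\times1$ principal submatrices of an $F_0$-matrix are already covered by the ``order at most $n-2$'' clause for $n\ge3$---is handled correctly. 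There is nothing in the present paper to compare against.
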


There is much more information on nonsingular $F_0$-matrices. We only state one relevant result. For more details, we refer to \cite{smith2}.

\begin{thm} \cite[Lemma 4.1]{smith2}
Let $A$ be a nonsingular $F_0$-matrix. Then all principal minors of $A^{-1}$ of order at least two, are nonpositive.
\end{thm}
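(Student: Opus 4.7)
The plan is to use the classical Jacobi (complementary minor) identity for the inverse, combined with the definition of an $F_0$-matrix and Theorem \ref{f0mot}.

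Recall that for any invertible $A \in \mnr$ and any index set $\alpha \subseteq \{1,\ldots,n\}$, the principal minor of $A^{-1}$ indexed by $\alpha$ satisfies
\[
\det\bigl(A^{-1}[\alpha]\bigr) \;=\; \frac{\det\bigl(A[\alpha^c]\bigr)}{\det(A)},
\]
where $\alpha^c = \{1,\ldots,n\}\setminus \alpha$ and $\det(A[\emptyset])$ is taken to be $1$. This is the standard Jacobi identity for principal minors, and the sign factors cancel because $\alpha=\beta$.

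The first step is to note that $\det(A)<0$ by Theorem \ref{f0mot}, since $A$ is a nonsingular $F_0$-matrix. It therefore suffices, for any $\alpha$ with $|\alpha|=k\geq 2$, to show that $\det(A[\alpha^c])\geq 0$; dividing by the negative number $\det(A)$ will then yield a nonpositive minor of $A^{-1}$.

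The second step splits into cases on $k$. If $k=n$, then $\alpha^c=\emptyset$ and $\det(A[\alpha^c])=1\geq 0$, giving $\det(A^{-1})=1/\det(A)<0$. If $2\leq k\leq n-1$, then $|\alpha^c|=n-k$ satisfies $1\leq n-k\leq n-2$, so $A[\alpha^c]$ is a principal submatrix of $A$ of order at most $n-2$. By part (1) of Definition \ref{f_0defn}, every such submatrix is an $M$-matrix, and an $M$-matrix has nonnegative determinant (its eigenvalues have nonnegative real parts, and pairing complex conjugates shows the product is real and $\geq 0$). Hence $\det(A[\alpha^c])\geq 0$ in all remaining cases.

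Combining the two steps gives $\det(A^{-1}[\alpha])=\det(A[\alpha^c])/\det(A)\leq 0$ for every $\alpha$ with $|\alpha|\geq 2$, which is the desired conclusion. There is no real obstacle here: the whole argument is a one-line consequence of Jacobi's identity once Theorem \ref{f0mot} and the $M$-matrix hypothesis on small principal submatrices are available. The only subtlety is the careful bookkeeping of the index set sizes to make sure $|\alpha^c|$ always falls within the range covered by the $F_0$ hypothesis, i.e.\ at most $n-2$, which is precisely what $k\geq 2$ ensures.
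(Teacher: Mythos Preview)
Your proof is correct. The paper itself does not supply a proof of this statement; it merely cites it as \cite[Lemma 4.1]{smith2}, so there is no ``paper's own proof'' to compare against. Your use of Jacobi's complementary-minor identity together with $\det(A)<0$ from Theorem~\ref{f0mot} and the nonnegativity of principal minors of $M$-matrices is exactly the natural route, and the bookkeeping on $|\alpha^c|\le n-2$ is handled correctly.
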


Next, let us recall some results that are true for circulant matrices of order $3 \times 3$. Let $$P=\begin{pmatrix}
 0 & 1 & 0\\
 0 & 0 & 1 \\
 1 & 0 & 0
\end{pmatrix}$$ and let $\cal C$ denote the class of all ($3\times 3$) circulant matrices so that each $A \in \cal C$ is of the form 
\begin{equation}\label{circulantmatrix}
\alpha_0I +\alpha_1 P+\alpha_2 P^2,
\end{equation}
where $\alpha_0, \alpha_1$ and $\alpha_2$ are real numbers. 

The class $\cal C$ has the property that if $A \in \cal C$ and $\alpha \in \mathbb{R},$ then $\alpha I+A, \alpha A \in \cal C.$ If $A \in \cal C$ is nonsingular, then $A^{-1} \in \cal C.$ Let $A \in \cal C$ have the form as above. Then $\alpha_0+\alpha_1+\alpha_2$ is an eigenvalue of $A$ with $(1,1,1)^T$ as a corresponding eigenvector. Next, let $\omega =-\frac{1}{2}+i \frac{\sqrt{3}}{2},$ with $i^2=-1.$ Then $z_A:=\alpha_0 +\alpha_1 \omega +\alpha_2 \omega^2$ is an eigenvalue of $A,$ with $(1, \omega, \omega^2)^T,$ as an associated eigenvector. Of course, the other eigenvalue of $A$ is $\bar{z_A}.$

Let ${\cal C}^{(1)}$ and ${\cal C}^{(-1)}$ denote two subclasses of $\cal C$ defined by:
\begin{equation}
   {\cal C}^{(1)}:=\{A \in {\cal C}:\alpha_0+\alpha_1+\alpha_2=1 \} 
\end{equation}
and 
\begin{equation}
  {\cal C}^{(-1)}:=\{A \in {\cal C}:\alpha_0+\alpha_1+\alpha_2=-1 \}  
\end{equation}

It is shown in \cite[Theorem 4.2]{fiedmarneu} that if $A \in {\cal C}^{(1)}$, then $A$ is uniquely determined by $z_A.$ More precisely, any $A$ of the form \ref{circulantmatrix}, satisfies $A \in {\cal C}^{(1)}$ iff $z_A=\alpha_0 +\alpha_1 \omega +\alpha_2 \omega^2,$ whenever  $\alpha_0 +\alpha_1+\alpha_2=1.$ 
In particular, in the same result, it is shown that $A\in {\cal C}^{(1)}$ is an $M$-matrix iff $z_A=a+ib$ satisfies the inequality $$a\geq 1+\sqrt{3}|b|.$$

Now, let us turn our attention to matrices in $\cal C$ which have a negative eigenvalue. For an $N_0$-matrix, here is a result that was proved in \cite[Theorem 3.1]{smith2}. $A\in{\cal  C}^{(-1)}$ is uniquely determined by $z_A.$  In particular, $A\in {\cal C}^{(-1)}$ is an $N_0$-matrix iff $z_A=a+ib$ satisfies the inequalities 
\begin{center}
$a\geq \rm max \{\frac{1}{2}, -1 +\sqrt{3}|b|\}$ and  $(a-1)^2+b^2\geq 1.$
\end{center}

We obtain a version of these results for the classes  $F_0$-matrices, inverse $F_0$-matrices, inverse $N_0$-matrices and inverse $M$-matrices, in Theorem \ref{circ}.

A number of results stated here for the specific subclasses, have generalizations to the larger class of nonsingular $Z$-matrices. We refer the reader to the works \cite{fiedlermark}, \cite{nabben1}, \cite{nabben2}. For some recent results see \cite{nabben3} and for a short survey, refer to \cite{stuart}.

We close this section by giving an overview of the new results of this article. In Theorem \ref{singularf0}, we show that the group inverse of a reducible $F_0$-matrix, is nonpositive. For irreducible $F_0$-matrices of order $3 \times 3$, we obtain the same conclusion in Theorem \ref{3x3groupinv}. While an analogoue of Theorem \ref{singularf0} is shown to be false for the Moore-Penrose inverse, once again, affirmative answers are obtained for matrices of order $3 \times 3$. These are proved in Theorem \ref{3x3mpinv_red} and Theorem \ref{3x3mpinv_irred}. Some specific results relevant to the linear complementarity problem are collected in Theorem \ref{compl}. Results on $3 \times 3$ circulant matrices pertinent to the matrix classes considered here, are presented in Theorem \ref{circ}. Finally, certain well known invertible matrices are shown to be inverse $F_0$-matrices in Theorem \ref{d_n+1f0}.

\section{Preliminaries}
Let $A \in \mmn$. Recall that the {\it Moore-Penrose inverse} of $A$ is the unique matrix $X\in \mnm$ that satisfies the equations $AXA=A, XAX=X, (AX)^*=AX$ and $(XA)^*=XA.$ We denote the Moore-Penrose inverse of $A$ by $A^{\dag}$. The {\it group inverse} of a matrix $A\in \mnr$ is the unique matrix $X \in \mnr$, if it exists, that satisfies the equations $AXA=A, XAX=X$ and $AX=XA$. If it exists, then the group inverse is denoted by $A^{\#}$. A necessary and sufficient condition for the group inverse of a matrix $A$ to exist is that $R(A^2)=R(A)$, which is equivalent to the condition $N(A^2)=N(A)$. Another characterization is that $A^{\#}$ exists iff $R(A)$ and $N(A)$ are complementary subspaces of $\mathbb{R}^n$. It is easy to show that a nilpotent matrix does not have group inverse, whereas the group inverse of an idempotent matrix (exists and) is itself. 

Next, we recall the full rank factorization. Let $A \in \mmn$ with $\rm rk (A)=r>0.$ We say that $A$ has a full rank factorization, if there exist $F\in \mr, G\in \mnrn$ such that $\rm rk(F)=\rm rk(G)=r$ and $A=FG$. It is well known that any nonzero matrix has a full rank factorization. Let $A=FG$ be a full rank factorization of $A$. Then $A^{\dag}=G^{\dag}F^{\dag}=G^*(GG^*)^{-1}(F^*F)^{-1}F^*.$ Again, let $A=FG$ be a full rank factorization of $A\in \mnr.$ Then, a necessary and sufficient condition for $A^{\#}$ to exist is that $GF$ is invertible. In such a case, one also has the formula $A^{\#}=F(GF)^{-2}G.$ For further details on these generalized inverses, we refer the reader to \cite{bg}.

We will have occasion to discuss relationships between a matrix class and the linear complementarity problem. Let us recall this notion. Let $A \in \mnr$ and $q \in \mathbb{R}^n.$ {\it The Linear Complementarity Problem} LCP$(A,q)$ is to determine if there exists $x\in \mathbb{R}^n$ such that 
$$x \geq 0, ~y:=Ax+q \geq 0~ \textit{and} ~x^Ty=0.$$ 

A vector $x$ that satisfies the first two conditions is called a {\it feasible} vector, and the set of all feasible vectors for LCP$(A, q)$ is denoted by FEA$(A, q)$. A vector $x$ that satisfies all the three conditions is called a (complementary) {\it solution}, and the set of all solutions for LCP$(A, q)$ is denoted by SOL$(A, q)$.

A matrix $A$ is called a $Q$-matrix if for any $q \in \mathbb{R}^n$, the linear complementarity problem LCP$(A, q)$ has at least one solution. A matrix $A$ is called a $Q_0$-matrix if whenever a feasible vector exists for LCP$(A, q)$, there exists at least one complementary solution. That is, for any $q \in \mathbb{R}^n$ such that FEA$(A, q) \neq \emptyset$, we must have SOL$(A, q) \neq \emptyset$. 

A matrix $A \in \mnr$ is said to be an {\it $R_0$-matrix}, if the LCP$(A,0)$ has zero as the only solution. This is equivalent to the implication:
$$x \geq 0, Ax \geq 0 ~{\textit and} ~x^TAx=0 \Longrightarrow x=0.$$

Matrix $A$ is called {\it semimontone} if 
$$0\neq x \geq 0 \Longrightarrow ~{\textit there~ exists}~ k~{\textit such ~that}~ x_k >0 ~{\textit and}~ (Ax)_k \geq 0.$$

\section{Main Results}
\subsection{Nonpositivity/$Z$-property of generalized inverses of $F_0$-matrices}

As mentioned earlier, if $M\in\mnnr$ is an $F_0$-matrix,  then, $M$ may be written as $$M=\begin{pmatrix}
A & b\\
c^T & d
\end{pmatrix},$$ where $A\in \mnr$ is an $N_0$-matrix, $b, c \in \mathbb{R}^n$ are nonpositive and $d \in \mathbb{R}$ is nonnegative. In general, $F_0$-matrices may be singular and possess some interesting nonnegativity/nonpositivity properties. One of the objectives of this survey is to prove some of those. An illustrative example gives a hint at what we are interested in. Let $A=-\begin{pmatrix}
    0 & 1\\
    1 & 0
\end{pmatrix},~ b=c^T=(-1,0)^T$ and $d=0.$ Then $A$ is an $N_0$-matrix, $b,c$ are nonpositive vectors and 
$$M=\begin{pmatrix}
~~0 & -1 & -1\\
-1 & ~~0 & ~~0\\
-1 & ~~0 & ~~0
\end{pmatrix}$$ is a singular $F_0$-matrix. It is easy to verify that $M^{\#}$ exists and that $M^{\#}=\frac{1}{2}M.$ Thus, $M^{\#}$ is a $Z$-matrix. Further, it is important to observe in this simple example the fact that one may also have all the diagonal entries, of the group inverse, to be zero. 

Motivated by this example, we may ask the question: Does Theorem \ref{f0mot} have a generalization for singular matrices, which however, are group invertible? We show that the answer is in the affirmative. This is presented in Theorem \ref{singularf0}.

Let us make some preliminary observations. Let $M$ be as above. It is easy to observe that, for $M$ to be singular, it is necessary and sufficient that the number $d$ satisfies $d=c^TA^{-1}b.$ Due to the nonpositivity of the off-diagonal entries and the matrix $A^{-1},$ it follows that $d \leq 0.$ This, in conjunction with the fact that all diagonal entries of an $F_0$-matrix are nonnegative, implies that $d=0.$ Thus, a singular $F_0$-matrix is of the form $$M=\begin{pmatrix}
A & b\\
c^T & 0
\end{pmatrix},$$
with the additional assumptions on $b,c$ and $A$ as above. In our first result, we show that the group inverse of $M$ exists. This is rather interesting and is a consequence of the nonnegativity/nonpositivity assumptions on the relevant  matrix/vectors. We also show that the group inverse of $M$ is a $Z$-matrix, if it is reducible. Thus, we have an extension of Theorem \ref{f0mot} for the case of singular matrices. We also prove some results relating to the LCP. 

We will make use of the fact that if $A$ is an invertible matrix, then a necessary and sufficient condition for the (rank one perturbed) matrix $A+uv^T$ to be invertible (given by the Sherman-Morrison-Woodbury formula), is the statement that $1+v^TA^{-1}u \neq 0.$ In such a case, we also have the explicit expression for the inverse given by: $$(A+uv^T)^{-1}=A^{-1}-\frac{1}{1+v^TA^{-1}u}A^{-1}uv^TA^{-1}.$$

\begin{thm}\label{singularf0}
Let $M$ be an $F_0$ matrix, with the representation given as above. Let $M$ be singular. We then have the following:\\
(1) $\rm rk(M)=n.$\\
(2) $M^{\#}$ exists.\\
(3) Let $M$ be reducible. Then $M^{\#} \leq 0$ (so that $M^{\#}$ is a $Z$-matrix). 
\end{thm}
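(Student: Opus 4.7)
The plan is to exploit the block decomposition $M=\begin{pmatrix}A & b\\ c^T & d\end{pmatrix}$ together with the sign information $A^{-1}\leq 0$ (because $A$ is $N_0$), which by the criterion \cite[Theorem 2.7]{gjohn1} also makes $A^{-1}$, and therefore $A$ itself, irreducible. For (1), the Schur-complement identity $\det(M)=\det(A)(d-c^T A^{-1}b)$ combined with singularity of $M$ gives $d=c^T A^{-1}b$; since $c^T A^{-1}b\leq 0$ while $d\geq 0$, this forces $d=0=c^T A^{-1}b$, and the presence of the invertible $n\times n$ principal submatrix $A$ inside the singular $(n+1)\times(n+1)$ matrix $M$ pins $\operatorname{rk}(M)=n$.

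For (2), I would produce the explicit full-rank factorization $M=FG$ with $F=\begin{pmatrix}A\\ c^T\end{pmatrix}$ and $G=\bigl(I_n\ \ A^{-1}b\bigr)$; the identity $FG=M$ uses precisely the relation $c^T A^{-1}b=0$ established in (1). By the criterion recalled in the Preliminaries, $M^{\#}$ exists iff $GF$ is invertible, and a direct calculation yields the rank-one perturbation $GF=A+A^{-1}b\,c^T$. The Sherman--Morrison--Woodbury criterion recalled immediately before the theorem makes this invertible precisely when $1+c^T A^{-2}b\neq 0$. Since $A^{-2}=(A^{-1})^2\geq 0$ while $b,c\leq 0$, one has $c^T A^{-2}b\geq 0$, so $1+c^T A^{-2}b\geq 1$, proving that $M^{\#}$ exists.

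For (3), I would first show that reducibility of $M$ forces $b=0$ or $c=0$. If $\{S_1,S_2\}$ is a nontrivial partition of $\{1,\dots,n+1\}$ with $M_{ij}=0$ for $i\in S_1,\,j\in S_2$, its restriction to $\{1,\dots,n\}$ would give a reducing partition of $A$ unless one of the restricted pieces is empty; since $A$ is irreducible, the only possibilities are $S_1=\{n+1\}$ (forcing $c=0$) or $S_2=\{n+1\}$ (forcing $b=0$). In each case the factorization collapses nicely: for $b=0$, taking $F=\begin{pmatrix}A\\ c^T\end{pmatrix}$ and $G=(I_n\ \ 0)$ gives $GF=A$ and
$$M^{\#}=F(GF)^{-2}G=\begin{pmatrix}A^{-1} & 0\\ c^T A^{-2} & 0\end{pmatrix};$$
for $c=0$, taking $F=\begin{pmatrix}I_n\\ 0\end{pmatrix}$ and $G=(A\ \ b)$ gives
$$M^{\#}=\begin{pmatrix}A^{-1} & A^{-2}b\\ 0 & 0\end{pmatrix}.$$
Reading signs off $A^{-1}\leq 0$ and $A^{-2}\geq 0$ (applied to $b,c\leq 0$) immediately gives $M^{\#}\leq 0$.

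The main obstacle I foresee is the general existence claim in (2): the combinatorial/sign ingredients alone do not obviously guarantee $R(M)\cap N(M)=\{0\}$, and the payoff of choosing the particular $F,G$ above is that $GF$ becomes a rank-one perturbation of $A$, reducing invertibility to the one-line Sherman--Morrison inequality $c^T A^{-2}b\geq 0$. The reducibility analysis in (3) is then combinatorially routine once one uses that $N_0$-matrices are automatically irreducible, and the explicit block formulas above let one read the sign conclusion directly.
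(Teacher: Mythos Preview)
Your proposal is correct and follows essentially the same route as the paper: the same full-rank factorization $M=FG$ with $F=\begin{pmatrix}A\\ c^T\end{pmatrix}$, $G=(I\ \ A^{-1}b)$, the same Sherman--Morrison check $1+c^TA^{-2}b\geq 1>0$ for (2), and the same block formula $M^{\#}=F(GF)^{-2}G$ with $GF=A$ in the reducible case for (3).

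One point where you are actually more careful than the paper: in (3) the paper asserts directly that reducibility of $M$ together with irreducibility of $A$ forces $c=0$, and then computes only the block-upper-triangular case. Your partition argument correctly identifies that reducibility gives either $S_1=\{n+1\}$ or $S_2=\{n+1\}$, hence $c=0$ \emph{or} $b=0$, and you verify $M^{\#}\leq 0$ in both cases. (The two cases are of course symmetric under transposition, since $(M^T)^{\#}=(M^{\#})^T$ and $M^T$ is again a singular $F_0$-matrix with $b$ and $c$ swapped, so the paper's computation suffices after this observation; but your write-up makes the case split explicit.)
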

\begin{proof}
(1) This is easy to see, as $M$ is singular, while the leading maximal principal submatrix $A$, is nonsingular.\\
(2) We have 
$$M=\begin{pmatrix}
A & b\\
c^T & 0
\end{pmatrix}=FG=:\begin{pmatrix}
A\\
c^T \end{pmatrix}\begin{pmatrix}
    I & A^{-1}b.
\end{pmatrix}$$
Then $$GF=\begin{pmatrix}
    I & A^{-1}b \end{pmatrix}\begin{pmatrix}
    A\\
c^T 
\end{pmatrix}=A+gc^T,$$
where $g:=A^{-1}b.$ Note that 
$$1+c^TA^{-1}g=1+c^TA^{-2}b > 0,$$
since $A^{-2}=(A^{-1})^2 \geq 0$ and $b,c$ are nonpositive vectors. Thus, by the Sherman-Morrison-Woodbury formula, $GF$ is invertible and so $M^{\#}$ exists.\\
(3) Let $M$ be reducible. Since $A$ is an $N_0$-matrix, it is irreducible (as mentioned earlier) and so it follows that $c=0$. From the full rank factorization given earlier, we now have $GF=A$ (which is invertible) so that $$M^{\#}=F(GF)^{-2}G=FA^{-2}G=\begin{pmatrix}
A\\
0 \end{pmatrix}A^{-2}\begin{pmatrix}
    I & A^{-1}b
\end{pmatrix}=\begin{pmatrix}
    A^{-1} & A^{-2}b\\
    0 & 0
\end{pmatrix}.$$ 
Now, $A^{-1}$ is a nonpositive matrix, $A^{-2}$ is a nonnegative matrix and $b$ is a nonpositive vector and so $M^{\#} \leq 0$. In particular, $M^{\#}$ is a $Z$-matrix.
\end{proof}

In relation to item (3), one may ask as to what conclusion one could draw, when the matrix is irreducible. A number of randomly generated numerical examples have all turned out to show that the group inverse of a singular irreducible matrix is also nonpositive. However, a general proof seems elusive. Nevertheless, we are able to prove this result for irreducible matrices of order $3 \times 3$. 

First, in the next auxiliary result, we identify $N_0$-matrices of order $2 \times 2.$ The proof is trivial. We use this to show that all $F_0$-matrices of order $3 \times 3$ (reducible or irreducible) have the property that their group inverses are nonpositive.

\begin{lem}\label{aux}
Let $A \in \mathbb{R}^{2 \times 2},$ be an $N_0$-matrix. Then $A$ is of one of the four forms given by: 
\begin{equation}\label{form1}
A=\begin{pmatrix}
  0 & \beta\\
  \gamma & \delta
\end{pmatrix},~{\textit with}~\delta >0, ~{\textit and}~\beta, \gamma <0. 
\end{equation}
\begin{equation}
A=\begin{pmatrix}\label{form2}
  \alpha & \beta\\
  \gamma & 0
\end{pmatrix}, ~{\textit with}~ \alpha >0~{\textit{and}~} \beta, \gamma <0. 
\end{equation}
\begin{equation}\label{form3}
A=\begin{pmatrix}
  0 & \beta\\
  \gamma & 0
\end{pmatrix}, ~{\textit{with}}~ \beta, \gamma <0.  
\end{equation}
\begin{equation}\label{form4}
A=\begin{pmatrix}
  \alpha & \beta\\
  \gamma & \delta
\end{pmatrix}, ~{\textit{with}}~ \alpha, \delta >0, ~\beta, \gamma <0~ {\textit{and}}~ \alpha\delta< \beta \gamma.  
\end{equation}
\end{lem}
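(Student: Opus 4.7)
The plan is to apply the characterization of $N_0$-matrices recalled earlier in the survey (the one from \cite[Lemma 2.1]{gjohn1}), which says that a $Z$-matrix $A$ is an $N_0$-matrix if and only if all proper principal submatrices of $A$ are $M$-matrices and $\det(A)<0$. For $n=2$, the proper principal submatrices are just the $1\times 1$ diagonal entries, and a $1\times 1$ matrix $(a)$ is an $M$-matrix precisely when $a\geq 0$. So writing
$$A=\begin{pmatrix}\alpha & \beta\\ \gamma & \delta\end{pmatrix},$$
the $Z$-property already forces $\beta,\gamma\leq 0$, and the characterization above yields $\alpha,\delta\geq 0$ and $\alpha\delta-\beta\gamma<0$.

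From $\alpha\delta<\beta\gamma$ together with $\alpha,\delta\geq 0$ and $\beta,\gamma\leq 0$, the key observation is that neither $\beta$ nor $\gamma$ can vanish: if $\beta=0$ then $\beta\gamma=0\leq \alpha\delta$, contradicting $\alpha\delta<\beta\gamma$; similarly $\gamma\neq 0$. Hence $\beta<0$ and $\gamma<0$ in all cases.

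Then I would simply split into four subcases according to whether the two diagonal entries $\alpha$ and $\delta$ are zero or positive. The case $\alpha=\delta=0$ gives \eqref{form3} (and here $\alpha\delta=0<\beta\gamma$ is automatic since $\beta,\gamma<0$); the case $\alpha=0,\ \delta>0$ gives \eqref{form1} (again automatic); the case $\alpha>0,\ \delta=0$ gives \eqref{form2} (automatic); and the remaining case $\alpha,\delta>0$ gives \eqref{form4}, where the strict inequality $\alpha\delta<\beta\gamma$ must be retained explicitly since it is no longer forced by the sign pattern. Conversely, any matrix of one of the four listed forms is a $Z$-matrix with nonnegative diagonal (so its $1\times 1$ principal submatrices are $M$-matrices) and satisfies $\det(A)=\alpha\delta-\beta\gamma<0$, so by the same characterization it is an $N_0$-matrix.

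There is no real obstacle here; as the authors note, the proof is essentially a bookkeeping exercise once the $\det<0$ characterization of $N_0$-matrices is invoked. The only point that could conceivably be mishandled is remembering that $N_0$ (as opposed to $N$) allows the diagonal entries, and therefore the $1\times 1$ principal minors, to be zero, which is exactly why the four distinct normal forms appear rather than a single one.
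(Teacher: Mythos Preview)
Your argument is correct and is precisely the routine verification the authors have in mind; the paper itself omits the proof entirely, declaring it trivial. Your use of the characterization from \cite[Lemma~2.1]{gjohn1} (nonnegative diagonal together with $\det(A)<0$) and the subsequent four-way case split on which diagonal entries vanish is the natural and expected route.
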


\begin{thm}\label{3x3groupinv}
Let $M\in \mathbb{R}^{3 \times 3}$ be a singular irreducible $F_0$-matrix. Then $M^\#\leq 0.$
\end{thm}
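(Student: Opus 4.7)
My plan is to combine the full rank factorization from Theorem \ref{singularf0} with the classification of $2\times 2$ $N_0$-matrices given in Lemma \ref{aux}. Starting from the representation $M = \begin{pmatrix} A & b \\ c^T & 0 \end{pmatrix}$ with $A$ a $2\times 2$ $N_0$-matrix, $b, c \le 0$ and $c^T A^{-1} b = 0$, irreducibility of $M$ rules out $b = 0$ and $c = 0$.

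I would first dispose of form \eqref{form4}: expanding $c^T A^{-1} b$ via the cofactor inverse shows that it equals $(\det A)^{-1}\bigl[\delta\, c_1 b_1 + \alpha\, c_2 b_2 + (-\beta)\, c_1 b_2 + (-\gamma)\, c_2 b_1\bigr]$, a sum of four nonnegative terms with strictly positive coefficients. Vanishing forces every $c_i b_j = 0$, hence $b = 0$ or $c = 0$, contradicting irreducibility. For forms \eqref{form1}, \eqref{form2}, \eqref{form3}, the same expansion has strictly fewer positive summands, and imposing $c^T A^{-1} b = 0$ together with $b, c \ne 0$ forces a unique sparsity pattern for $(b, c)$ in each form: either $b = (0, b_2)^T$ and $c = (0, c_2)^T$, or the analogous $b = (b_1, 0)^T$ and $c = (c_1, 0)^T$, with the two patterns (and the forms \eqref{form1}, \eqref{form2}) related by the permutation that swaps the rows and columns of $A$.

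With the sparsity pattern fixed, I would apply the full rank factorization $M = FG$ from Theorem \ref{singularf0}. A direct computation shows that $K := GF = A + g c^T$, with $g = A^{-1} b$, is itself an invertible $N_0$-matrix of the same form as $A$ (one nonpositive entry of $A$ is replaced by a strictly more negative number); hence $K^{-1}$ retains the sign pattern and the zero positions of $A^{-1}$, and $K^{-2}$ is available in closed form. For the two patterns coming from form \eqref{form3}, $K^{-2}$ turns out to be a positive scalar multiple of $I$, and $M^\# = F K^{-2} G$ reduces to a positive scalar multiple of $M$, which is itself nonpositive. For the patterns coming from forms \eqref{form1} and \eqref{form2}, the diagonal entry of $M^\#$ at the position of the unique positive diagonal entry of $A$ vanishes by a clean algebraic cancellation, and every other entry of $M^\#$ simplifies to a scalar product whose sign is manifestly $\le 0$.

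The principal obstacle is bookkeeping rather than any conceptual difficulty: the four sub-cases produce four slightly different explicit formulas for $M^\#$, and in forms \eqref{form1}, \eqref{form2} the nonpositivity of the off-diagonal entries depends on cancellations that must be tracked carefully. A unifying observation I would try first is that in every surviving sub-case the nonzero off-diagonal pattern of $M$ is bipartite, so there exists a signature $D = \mathrm{diag}(\pm 1)$ with $DMD \ge 0$; since $M^\# = D (DMD)^\# D$, a single computation of the group inverse of that nonnegative matrix then suffices to verify the required sign pattern of $M^\#$.
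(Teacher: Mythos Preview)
Your proposal is correct and follows essentially the same route as the paper: both arguments use Lemma~\ref{aux} to split into the four forms of $A$, eliminate form~\eqref{form4} via the sign argument on $c^{T}A^{-1}b$, pin down the sparsity of $(b,c)$ in the remaining forms by the same term-by-term vanishing, and then verify $M^{\#}\le 0$ by an explicit computation in each surviving sub-case. Your packaging through $K=GF=A+gc^{T}$ (and the bipartite-signature aside) is a tidy way to organize the bookkeeping, but the paper simply writes out $M^{\#}$ directly in each sub-case; the underlying case analysis and the sign checks are the same.
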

\begin{proof}
Let $M=\begin{pmatrix}
 A & b \\
 c^T & 0
\end{pmatrix},$ where $A$ is an $N_0$-matrix, and $b, c$ are nonpositive column vectors. First, we assume that $b,c$ have at least one nonzero component. Otherwise, $M$ (or $M^T$) is reducible and this has been considered in (3) of Theorem \ref{singularf0}. 

We make use of Lemma \ref{aux} for the four forms that the matrix $A$ can take. 
Let us begin with $A$ as in \ref{form4}. Then $A^{-1}
=\frac{1}{\alpha \delta - \gamma \beta}
\begin{pmatrix}
~\delta & -\beta \\
-\gamma & ~\alpha
\end{pmatrix}$ and so, the condition that $c^TA^{-1}b=0$ translates to $$c_1\delta b_1-c_1 \beta b_2-c_2 \gamma b_1+c_2 \alpha b_2=0.$$
Since each term on the left hand side is nonnegative, each must equal zero. So, either $c=0$ or $b=0.$ Thus, once again, $M$ is reducible. 

Next, let $A$ have form \ref{form3}. Then $A^{-1}=\begin{pmatrix}
0 & \frac{1}{\gamma} \\
\frac{1}{\beta} & 0
\end{pmatrix}$ and so, 
$$0=c^TA^{-1}b=\frac{1}{\gamma}c_1b_2+\frac{1}{\beta}b_1c_2.$$ The right hand side is a sum of nonnegative terms and so each equals zero. So, $c_1b_2=0=b_1c_2.$ This results in the following two possibilities, employing irreducibility.
\begin{center}
$M=\begin{pmatrix}
    0 & \beta & b_1\\
    \gamma & 0 & 0\\
    c_1 & 0 & 0
\end{pmatrix}$ and 
$M=\begin{pmatrix}
    0 & \beta & 0\\
    \gamma & 0 & b_2 \\
    0 & c_2 & 0 
\end{pmatrix}.$    
\end{center}
In the first case, one has 
$$M^{\#}=\frac{1}{\beta\gamma+b_1c_1}\begin{pmatrix}
    \gamma & \beta & b_1\\
    \gamma & 0 & 0\\
    c_1 & 0 & 0
\end{pmatrix} \leq 0$$
and in the latter, we have
$$M^{\#}=\frac{1}{\beta\gamma+b_2c_2}M \leq 0.$$

Finally, we note that a matrix $A$ given as in \ref{form2} is permutationally equivalent to \ref{form1} and so, it suffices to consider only a matrix of type \ref{form1}. Then $A^{-1}=-\frac{1}{\beta\gamma}\begin{pmatrix}
~~\delta & -\beta \\
-\gamma & ~~0
\end{pmatrix}$ and so, again using $c^TA^{-1}b=0$, we have $$\delta c_1b_1- \beta c_1b_2-\gamma b_1c_2=0.$$
As before, each term on the right hand side equals zero. Taking into account the irreducibility of the matrix, we have:
$$M=\begin{pmatrix}
    0 & \beta & 0\\
    \gamma & \delta & b_2 \\
    0 & c_2 & 0 
\end{pmatrix}.$$

It is easy to verify that for this case,
$$M^\#
=\frac{1}{(\beta \gamma +b_2 c_2)^2}
\begin{pmatrix}
  -\beta \gamma \delta & \beta (\beta \gamma +b_2 c_2) & -\beta \gamma \delta \\
  ~\gamma (\beta \gamma +b_2 c_2) & 0 &  ~ b_2 (\beta \gamma +b_2 c_2)\\
  -\gamma \delta c_2 & c_2 (\beta \gamma +b_2 c_2) & \delta b_2 c_2
\end{pmatrix}\leq 0,$$
completing the proof.
\end{proof}

Next, we turn our attention to the case of the Moore-Penrose inverse. A natural question in relation to item (3) of Theorem \ref{3x3groupinv}, is whether the Moore-Penrose inverse of $M$ is a $Z$-matrix. We show that this is not the case, even when $M$ is irreducible. 

\begin{ex}\label{mpcounter}
Let $$M=
\begin{pmatrix}
    ~~2 & -4 & -1 & -1 \\
    -2 &  ~~4 & -1 & -4 \\
    -2 & -2 & ~~1 & -6\\
    ~~0 & ~~0 & ~~0 & ~~0
\end{pmatrix}.$$ Here $A=
\begin{pmatrix}
    ~~2 & -4 & -1\\
    -2 &  ~~4 & -1\\
     -2 & -2 & ~~1
    \end{pmatrix},$ is a $Z$-matrix, $A^{-1}$ is irreducible and $A^{-1}\leq 0.$ So, $A$ is an $N_0$-matrix. All the diagonal entries and all the $2 \times 2$ principal minors of $M$ are nonnegative. Hence, $M$ is an $F_0$-matrix. Now, $$M^\dagger=\frac{1}{2609}
\begin{pmatrix}
    ~~538 & ~~245 & -327 & 0\\
    -149 & ~~257 & -220 & 0 \\
    -692 & -577 & ~~440 & 0\\
    -245 & -291 & -176 & 0
\end{pmatrix}$$ has a positive off-diagonal entry and so $M^\dagger$ is not a $Z$-matrix.
\end{ex}

Since, in general, $M^{\dag}$ is not a nonpositive matrix, one may ask whether $M^{\dag}$ is nonpositive on $R(M).$ In the next example, we show that even this not true. Let us make this precise. 

A matrix $G \in \mnr$ is said to be {\it row-monotone} if $$Gx \geq 0, ~x \in R
(G^T) \Longrightarrow x \geq 0.$$
Then, $G$ is row-monotone iff (\cite[Lemma 2.4]{debkcs}, \cite{berpl1})
$$x \in R(G) \cap \mathbb{R}^n_+ \Longrightarrow G^{\dag}x \geq 0.$$
This generalizes the requirement of the nonnegativitiy of $G^{\dag}$, to $G^{\dag}$ being nonnegative on its range space $R(G)$. Specifically, if $G^{\dag} \geq 0,$ then $G$ is row-monotone, while the converse is false. 

\begin{ex}
Let $M$ be as in Example \ref{mpcounter}. The vector $(37,14,30,-12)^T$ spans the null space $N(M).$ So, if $(x_1,x_2,x_3,x_4)^T=x \in R(M^T),$ then we have $$37x_1+14x_2+30x_3-12x_4=0.$$ Thus, the vector $u:=(6,3, -6,7)^T \in R(M^T)$, has both positive and negative coordinates, and $Mu=-(1,22,66,0)^T \leq 0$. Thus, $-M$ is not row-monotone.
\end{ex}

Let us also record the following "monotonicity type" affirmative result, implied by the inverse nonpositivity of the leading principal submatrix $A$. 

\begin{thm}
Let $M=\begin{pmatrix}
 A & b \\
 c^T & 0
\end{pmatrix} \in \mathbb{R}^{(n+1) \times (n+1)}$. Then the following implication holds:
$$Mx \leq 0, ~x_{n+1} =0 \Longrightarrow x\geq0.$$
\end{thm}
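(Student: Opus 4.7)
The proof is short and flows directly from the block structure together with the sign of $A^{-1}$, so I would not attempt anything fancy. The plan is as follows.

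Write $x = \begin{pmatrix} y \\ x_{n+1} \end{pmatrix}$ with $y \in \mathbb{R}^n$. Under the hypothesis $x_{n+1} = 0$, the block product collapses to
\begin{equation*}
Mx \;=\; \begin{pmatrix} A & b \\ c^T & 0 \end{pmatrix}\begin{pmatrix} y \\ 0 \end{pmatrix} \;=\; \begin{pmatrix} Ay \\ c^T y \end{pmatrix}.
\end{equation*}
So the scalar inequality $Mx \leq 0$ yields in particular the vector inequality $Ay \leq 0$ from the top $n$ coordinates (the last coordinate $c^T y \leq 0$ will play no role).

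Now invoke the standing assumption on the block $A$: since $M$ is an $F_0$-matrix with the given representation, $A$ is an $N_0$-matrix, hence (as recalled in the survey portion, citing \cite[Theorem 2.7]{gjohn1}) $A$ is nonsingular with $A^{-1} \leq 0$. Multiplying $Ay \leq 0$ on the left by the nonpositive matrix $A^{-1}$ reverses the inequality and gives $y = A^{-1}(Ay) \geq 0$. Combined with $x_{n+1} = 0 \geq 0$, this yields $x \geq 0$, as required.

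There is no serious obstacle here — the whole argument is one line after isolating the top block and applying $A^{-1} \leq 0$. The only thing to double-check is that one is legitimately using the representation from Section 4.1 (where $A$ is an $N_0$-matrix, hence invertible with nonpositive inverse), which is already established in the text preceding this statement.
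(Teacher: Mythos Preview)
Your proof is correct and matches the paper's own argument essentially line for line: write $x=(w,0)^T$, observe $Mx=(Aw,\,c^Tw)^T\le 0$ so that $Aw\le 0$, and then use $A^{-1}\le 0$ (from $A$ being an $N_0$-matrix) to conclude $w\ge 0$. There is nothing to add or correct.
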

\begin{proof}
Let $x=(w,0)^T$ be such that $0 \geq Mx=\begin{pmatrix}
 A & b \\
 c^T & 0
\end{pmatrix} \begin{pmatrix}
 w \\
 0
\end{pmatrix}=\begin{pmatrix}
Aw \\
c^Tw
\end{pmatrix}.$ Then $Aw\leq 0,$ which together with $A^{-1} \leq 0$ implies that $w \geq 0.$ Thus $x\geq 0.$
\end{proof}

Let us illustrate the above result for the matrix of Example \ref{mpcounter}.

\begin{ex}
Let $M$ be as in Example \ref{mpcounter} and $x=(x_1,x_2,x_3,0)^T$. Then $Mx \leq 0$ unpacks into $2x_1-4x_2-x_3 \leq0, -2x_1+4x_2-x_3\leq 0$ and $-2x_1-2x_2+x_3 \leq 0.$ Adding the first and the second inequality results in $x_3 \geq0,$ the sum of the first and the third yield $x_2 \geq 0$ while summing the second and the third give $-4x_1+2x_2 \leq 0,$ so that $x_1 \geq 0.$ We have shown that $x \geq 0.$   
\end{ex}

Interestingly, for the case of $3\times 3$ matrices, as in the case of the group inverse (which was even nonpositive), we show that the Moore-Penrose inverse of $M$ is a $Z$-matrix. We treat the cases of reducible and irreducible matrices, separately.

\begin{thm}\label{3x3mpinv_red}
Let $M\in \mathbb{R}^{3 \times 3}$ be a singular reducible $F_0$-matrix. Then $M^{\dag}$ is a $Z$-matrix.
\end{thm}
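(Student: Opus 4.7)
The plan is to reduce to a canonical singular form, obtain a closed expression for $M^{\dag}$ via a full rank factorization, and then inspect the sign of each entry. Throughout, I would use the block representation $M = \begin{pmatrix} A & b \\ c^T & 0 \end{pmatrix}$ already established, where $A$ is a $2\times 2$ $N_0$-matrix (hence irreducible with $A^{-1}\le 0$) and $b,c\le 0$. Since $A$ is irreducible, reducibility of $M$ forces either $b=0$ or $c=0$. The case $b=0$ can be absorbed by transposition: $M^T$ has the same block form with $A^T$ (still an $N_0$-matrix) as leading block and the two off-blocks swapped, and because $(M^\dag)^T = (M^T)^\dag$ and the transpose of a $Z$-matrix is a $Z$-matrix, it suffices to treat $c=0$.

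With $c=0$ I would mimic the full rank factorization used in the proof of Theorem \ref{singularf0}, writing $M = FG$ with $F = \begin{pmatrix} A \\ 0 \end{pmatrix}$ and $G = (I,\, g)$, where $g := A^{-1}b \ge 0$ (product of two nonpositives). Then $F^\dag = (A^{-1},\, 0)$ follows at once, while $(GG^T)^{-1} = (I + gg^T)^{-1} = I - \sigma^{-1}gg^T$ by Sherman--Morrison, with $\sigma := 1 + g^Tg > 0$. Combining the two gives
$$M^\dag \;=\; \begin{pmatrix} A^{-1} - \sigma^{-1}gg^T A^{-1} & 0 \\ \sigma^{-1} g^T A^{-1} & 0 \end{pmatrix}.$$

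It remains to verify that every off-diagonal entry is nonpositive. The last column vanishes, and the last-row entries $\sigma^{-1}(g^T A^{-1})_j$ are nonpositive since $g \ge 0$ and $A^{-1} \le 0$. The only delicate entries are $(M^\dag)_{12}$ and $(M^\dag)_{21}$, where the term $g_1 g_2 A^{-1}_{22}$ (respectively $g_1 g_2 A^{-1}_{11}$) enters with the opposite sign to the leading piece. The main obstacle is that a priori both contributions are nonpositive and one is being subtracted, so the overall sign is not evident. The observation that resolves it is the identity $Ag = b$: writing $A^{-1}$ via the cofactors of $A$, the numerator of $(M^\dag)_{12}$ collapses to $-a_{12} - g_2(a_{11}g_1 + a_{12}g_2) = -a_{12} - g_2 b_1$, which is nonnegative since $a_{12},b_1 \le 0$ and $g_2 \ge 0$; dividing by $\sigma\,\det(A)$ (with $\det(A) < 0$ because $A$ is $N_0$) yields $(M^\dag)_{12}\le 0$. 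A symmetric computation handles $(M^\dag)_{21}$, and the case analysis over the four forms of Lemma \ref{aux} is avoided entirely.
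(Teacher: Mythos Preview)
Your argument is correct and follows essentially the same route as the paper: reduce to $c=0$ by transposition, derive the block formula
\[
M^{\dag}=\begin{pmatrix} A^{-1}-\sigma^{-1}gg^{T}A^{-1} & 0 \\ \sigma^{-1}g^{T}A^{-1} & 0\end{pmatrix},\qquad g=A^{-1}b,\ \sigma=1+\|g\|^{2},
\]
and then check the signs of the off-diagonal entries of the leading $2\times 2$ block. The paper obtains the same formula (via Greville's method rather than the full rank factorization plus Sherman--Morrison) and carries out the sign check by expanding everything in the entries $\alpha,\beta,\gamma,\delta,b_{1},b_{2}$; its reference to the ``four forms'' of Lemma \ref{aux} is rhetorical, since the general case is treated in one pass. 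Your use of the identity $Ag=b$ to collapse the $(1,2)$ numerator to $-a_{12}-g_{2}b_{1}$ (and symmetrically for $(2,1)$) is a neater way to reach the very same final expression the paper obtains after its longer computation.
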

\begin{proof}
Since the transpose of a $Z$-matrix is again a $Z$-matrix, there is no loss of generality in assuming that  $M=\begin{pmatrix}
 A & b \\
 0 & 0
\end{pmatrix},$ where $A$ is an $N_0$-matrix and $b$ is a nonpositive column vector. By using the Greville's method (\cite{bg}, \cite{kcsjota}) (which appears to be more suitable than the full rank factorization, in this case) it may be shown that 
$$M^{\dag}=\frac{1}{1+{\parallel g 
\parallel}^2} \begin{pmatrix}
 ({1+{\parallel g 
\parallel}^2})A^{-1}-gg^TA^{-1}& 0 \\
g^TA^{-1}  & 0
\end{pmatrix},$$
where $g:=A^{-1}b \geq0.$ Note that since $g^TA^{-1} \leq 0$ the bottom-left row vector is nonpositive. We show that the top
left (leading principal) submatrix viz., $$(({1+{\parallel g 
\parallel}^2})I-gg^T )A^{-1},$$ is a $Z$-matrix, for all the four possible forms that the matrix $A$ can assume. This means that we must prove that 
\begin{equation*}
  \langle (({1+{\parallel g 
\parallel}^2})I-gg^T )A^{-1}e^1, e^2 \rangle  
\end{equation*}
and 
\begin{equation*}
   \langle (({1+{\parallel g 
\parallel}^2})I-gg^T )A^{-1}e^2, e^1 \rangle 
\end{equation*}
are nonpositive. Note that for any two vectors $u,v$, we have  $$\langle (({1+{\parallel g 
\parallel}^2})I-gg^T )A^{-1}u, v \rangle=(1+{\parallel g \parallel}^2) \langle A^{-1}u,v \rangle -(g^TA^{-1}u)\langle g,v\rangle,$$
where we have deliberately used both the dot product vectors as well as the inner product notation, for convenience. Thus, we need to prove that the numbers 
\begin{equation}\label{21position}
 (1+{\parallel g \parallel}^2) \langle A^{-1}e^1,e^2 \rangle -(g^TA^{-1}e^1)\langle g,e^2\rangle    \end{equation}
 and 
 \begin{equation}\label{12position}
  (1+{\parallel g \parallel}^2) \langle A^{-1}e^2,e^1 \rangle -(g^TA^{-1}e^2)\langle g,e^1\rangle  \end{equation}
are both nonpositive. 

Let $A=\begin{pmatrix}
    \alpha & \beta \\
    \gamma & \delta
\end{pmatrix}$, where $\alpha, \delta$ are nonnegative, $\beta,\gamma$ are nonpositive, and $b^T=(b_1, b_2)$ with $b_1, b_2\leq 0.$ We have
$A^{-1}=\frac{1}{\rm det A}\begin{pmatrix}
    \delta & -\beta \\
    -\gamma & \alpha
\end{pmatrix}$ and so 
$$A^{-1}e^1=\frac{1}{\rm det A}\begin{pmatrix}
    \delta\\-\gamma
\end{pmatrix}~ \textit{and}~A^{-1}e^2=\frac{1}{\rm det A}\begin{pmatrix}
    -\beta\\
    \alpha
\end{pmatrix}.$$
Thus, $$g^T=A^{-1}b=\frac{1}{{\rm det}A}(\delta b_1-\beta b_2, \alpha b_2-\gamma b_1).$$  It may be verified that $$(1+{\parallel g \parallel}^2) \langle A^{-1}e^2,e^1 \rangle=\frac{-\beta}{({\rm det}A)^3}\{(\alpha b_2-\gamma b_1)^2+(\beta b_2-\delta b_1)^2+({\rm det}A)^2\},$$
Now,
\begin{eqnarray*}
 (1+{\parallel g \parallel}^2) \langle A^{-1}e^2,e^1 \rangle -(g^TA^{-1}e^2)\langle g,e^1\rangle &=&  \frac{(\alpha b_2-\gamma b_1)(\beta \gamma -\alpha \delta)}{({\rm det}A)^3}-\frac{\beta}{{\rm det}A} \\
 &=& -\frac{b_1(\alpha b_2-\gamma b_1)}{({\rm det}A)^2}-\frac{\beta}{{\rm det}A} 
 \leq 0.
 \end{eqnarray*}
Similarly, one may verify that 
 \begin{eqnarray*}
   (1+{\parallel g \parallel}^2) \langle A^{-1}e^1,e^2 \rangle -(g^TA^{-1}e^1)\langle g,e^2\rangle &=& \frac{b_2(\beta b_2- \delta b_1)(\alpha \delta -\beta \gamma)}{({\rm det}A)^3} - \frac{\gamma}{{\rm det}A}\\
   &=& \frac{b_2(\beta b_2- \delta b_1)}{({\rm det}A)^2} - \frac{\gamma}{{\rm det}A} \leq 0.
 \end{eqnarray*}
\end{proof}

\begin{thm}\label{3x3mpinv_irred}
Let $M\in \mathbb{R}^{3 \times 3}$ be a singular irreducible $F_0$-matrix. Then $M^{\dag} \leq 0.$ 
\end{thm}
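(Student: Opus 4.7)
The plan is to proceed by the same case analysis that was used in the proof of Theorem \ref{3x3groupinv}, now computing the Moore-Penrose inverse directly in each case. Simultaneous row-column permutation preserves the $F_0$ property, irreducibility, singularity, and nonpositivity, and commutes with $M \mapsto M^{\dag}$; hence we may assume $M$ is one of the three canonical forms isolated there:
\begin{equation*}
\text{(a)}\ \begin{pmatrix} 0 & \beta & b_1 \\ \gamma & 0 & 0 \\ c_1 & 0 & 0 \end{pmatrix},\qquad \text{(b)}\ \begin{pmatrix} 0 & \beta & 0 \\ \gamma & 0 & b_2 \\ 0 & c_2 & 0 \end{pmatrix},\qquad \text{(c)}\ \begin{pmatrix} 0 & \beta & 0 \\ \gamma & \delta & b_2 \\ 0 & c_2 & 0 \end{pmatrix},
\end{equation*}
where every unspecified Greek parameter is strictly negative and $\delta > 0$ in (c). The strictness comes from irreducibility, since each entry lying on a directed cycle of the digraph of $M$ must be nonzero.

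For each case I would exhibit an explicit full rank factorization $M = FG$, taking $F \in \mathbb{R}^{3 \times 2}$ to consist of the two linearly independent columns of $M$, and then apply the identity $M^{\dag} = G^T(GG^T)^{-1}(F^TF)^{-1}F^T$. In cases (a) and (b) this choice makes both Gram matrices diagonal, and the pseudo-inverse drops out with very little work. Concretely, for (b) one takes $F = \begin{pmatrix} 0 & \beta \\ \gamma & 0 \\ 0 & c_2 \end{pmatrix}$, $G = \begin{pmatrix} 1 & 0 & b_2/\gamma \\ 0 & 1 & 0 \end{pmatrix}$, and obtains
\begin{equation*}
M^{\dag} = \begin{pmatrix} 0 & \gamma/(\gamma^2 + b_2^2) & 0 \\ \beta/(\beta^2 + c_2^2) & 0 & c_2/(\beta^2 + c_2^2) \\ 0 & b_2/(\gamma^2 + b_2^2) & 0 \end{pmatrix} \leq 0,
\end{equation*}
and case (a) is analogous, producing a pseudo-inverse whose nonzero entries sit in the same positions as those of $M$ and carry the same signs.

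For case (c), with $F = \begin{pmatrix} 0 & \beta \\ \gamma & \delta \\ 0 & c_2 \end{pmatrix}$ and the same $G$ as in (b), the matrix $GG^T$ is still diagonal, but $F^TF = \begin{pmatrix} \gamma^2 & \gamma\delta \\ \gamma\delta & \beta^2 + \delta^2 + c_2^2 \end{pmatrix}$ is not. Its determinant however simplifies to $\gamma^2(\beta^2 + c_2^2) > 0$, and the identity $\gamma(\beta^2 + \delta^2 + c_2^2) - \gamma \delta^2 = \gamma(\beta^2 + c_2^2)$ forces the middle row of $(F^TF)^{-1}F^T$ to collapse. After multiplying out, every entry of $M^{\dag}$ becomes a quotient with manifestly positive denominator (one of $\gamma^2 + b_2^2$, $\beta^2 + c_2^2$, or their product). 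The $(2,2)$ entry vanishes; the four entries at positions $(1,2), (2,1), (2,3), (3,2)$ have a single one of $\beta, \gamma, b_2, c_2$ in the numerator and hence are nonpositive; and the four corner entries at $(1,1), (1,3), (3,1), (3,3)$ each take the form $-\delta \cdot (\text{product of two of } \beta, \gamma, b_2, c_2)$ divided by a positive denominator, which is likewise nonpositive since $\delta > 0$ and the product of two negatives is positive.

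The main technical obstacle is the sign bookkeeping in case (c): the off-diagonal entry $-\gamma\delta$ of $(F^TF)^{-1}$ feeds a $\delta$-dependent cross term into four positions of $M^\dag$, and one must confirm it combines with the sign pattern of $\beta, \gamma, b_2, c_2$ to yield the correct sign throughout. Once the collapse identity above is noticed, this reduces to a routine check of nine explicit entries whose qualitative input is only that $\delta > 0$ while $\beta, \gamma, b_2, c_2 < 0$.
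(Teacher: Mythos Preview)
Your proposal is correct and follows essentially the same route as the paper: the same reduction to the three canonical forms via Theorem~\ref{3x3groupinv}, the same full rank factorizations $M=FG$, and the same formula $M^{\dag}=G^{\dag}F^{\dag}$. The only difference is cosmetic: the paper records that $F^{\dag}\le 0$ and $G^{\dag}\ge 0$ in each case and concludes $M^{\dag}=G^{\dag}F^{\dag}\le 0$ at once, whereas you multiply out and inspect the nine entries directly; both arrive at the same sign pattern.
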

\begin{proof}
Let $M=\begin{pmatrix}
 A & b \\
 c^T & 0
\end{pmatrix},$ where $A$ is an $N_0$-matrix, and $b, c$ are nonpositive column vectors. We show that there is a full rank factorization of $M$ given by $M=FG$, where $F^{\dag} \leq 0$ and $G^{\dag} \geq 0$. The proof would then follow, since $M^{\dag}=G^{\dag}F^{\dag}$. 

Again, we consider the three forms that the matrix $A$ can take. Assume that $A$ is given as in \ref{form3}. Then, as argued earlier, there are only two possibilities. In the first case, we have
\begin{center}
$M=\begin{pmatrix}
    0 & \beta & b_1\\
    \gamma & 0 & 0\\
    c_1 & 0 & 0
\end{pmatrix}$
\end{center}
and $M$ has the full rank factorization $M=FG$, where
\begin{center}
$F=\begin{pmatrix}
0 & \beta \\
\gamma & 0 \\
c_1 & 0
\end{pmatrix}$ and $G=\begin{pmatrix}
1 & 0 & 0 \\
0 & 1 & \frac{b_1}{\beta}
\end{pmatrix}.$
\end{center}
It may be verified that 
$$F^{\dag}=\frac{1}{\beta^2(\gamma^2+c_1^2)}
\begin{pmatrix}
0 & \beta^2\gamma & \beta^2c_1 \\
\beta(\gamma^2+c_1^2) & 0 & 0
\end{pmatrix} \leq 0$$ whereas  $$G^{\dag}=\frac{1}{\beta^2+b_1^2}\begin{pmatrix}
\beta^2+b_1^2 & 0 \\
0 & \beta^2 \\
0 & \beta b_1
\end{pmatrix} \geq 0.$$

The other possibility for $M$ is given as
\begin{center}
$M=\begin{pmatrix}
    0 & \beta & 0\\
    \gamma & 0 & b_2\\
    0 & c_2 & 0
\end{pmatrix}.$
\end{center}
Here, $M$ has the full rank factorization $M=FG$, where
\begin{center}
$F=\begin{pmatrix}
0 & \beta \\
\gamma & 0 \\
0 & c_2
\end{pmatrix}$ and $G=\begin{pmatrix}
1 & 0 & \frac{b_2}{\gamma} \\
0 & 1 & 0
\end{pmatrix}.$
\end{center}
Then
$$F^{\dag}=\frac{1}{\gamma(\beta^2+c_2^2)}\begin{pmatrix}
0 & \beta^2+c_2^2 & 0 \\
\beta\gamma & 0 & c_2\gamma
\end{pmatrix} \leq 0$$ and  $$G^{\dag}=\frac{1}{\gamma^2+b_2^2}\begin{pmatrix}
\gamma^2 & 0 \\
0 & \gamma^2+b_2^2 \\
\gamma b_2 & 0
\end{pmatrix} \geq0.$$

Next, assume that $A$ is as in \ref{form1}. Here, $$M=\begin{pmatrix}
    0 & \beta & 0\\
    \gamma & \delta & b_2 \\
    0 & c_2 & 0 
\end{pmatrix}$$
and a full rank factorization $M=FG$ is given by 
\begin{center}
  $F=
  \begin{pmatrix}
      0 & \beta \\
      \gamma & \delta \\
      0 & c_2
  \end{pmatrix},$
  \end{center}
 whereas $G$ is the same matrix as in the previous case. So, $G^{\dag} \geq 0$, while  
  $$F^\dagger= \begin{pmatrix}
    \frac{-\delta \beta}{\gamma(\delta^2+c_2^2)} & \frac{1}{\gamma} & \frac{-\delta c_2}{\gamma(\delta^2+c_2^2)} \\
    \frac{\beta}{\beta^2+c_2^2} & 0  & \frac{c_2}{\beta^2+c_2^2}
  \end{pmatrix}\leq 0.$$ This completes the proof.
  \end{proof}

We illustrate Theorem \ref{3x3mpinv_red} and Theorem \ref{3x3mpinv_irred}, by examples.

\begin{ex}
Let $M=\begin{pmatrix}
    ~~1 &   -2  &  -1\\
    -3 &   ~~1 &   -2\\
    ~~0  &  ~~0  & ~~0
\end{pmatrix}.$ Then $M$ is reducible and is an $F_0$-matrix. We have $$M^\dagger
=\begin{pmatrix}
   ~\frac{1}{15} &  -\frac{1}{5}    &     0 \\
   -\frac{1}{3} &  ~~0   &     0\\
   -\frac{4}{15}  & -\frac{1}{5}   &    0
\end{pmatrix},$$ 
showing that, while the Moore-Penrose inverse is a $Z$-matrix, it is not necessarily nonpositive.
\end{ex}

\begin{ex}
Let $M=\begin{pmatrix}
   ~~0  &  -1  &  -1\\
 -2   & ~~0  &   ~~0\\
    -1   &  ~~0   & ~~0
\end{pmatrix}.$ Then $M$ is an irrreducible $F_0$-matrix and its Moore-Penrose inverse is given by $$M^\dagger=
\begin{pmatrix}
     ~~0  &  -\frac{2}{5} &  -\frac{1}{5}\\
     -\frac{1}{2} &  ~~0  & ~~0\\
     -\frac{1}{2} &  ~~0  & ~~0
\end{pmatrix}\leq 0.$$ 
\end{ex}

\subsection{Complementarity properties}
In this section, we prove some consequences for $F_0$-matrices in the context of the linear complementarity problem. 

Let us recall a result that presents a necessary condition for a matrix to belong to $Q_0 \setminus Q.$ Note that if a matrix has a nonpositive row, then it cannot be a $Q$-matrix. By taking a vector $q$ whose last coordinate is negative, it is immediate to observe that LCP$(A,q)$ does not have a solution. Next, we show that such 

\begin{lem}\label{necessaryQ0}
Let $A$ be a $Q_0$-matrix which is not a $Q$-matrix. Then there exists a vector $0 \neq y \geq 0$ such that $A^Ty\leq 0.$
\end{lem}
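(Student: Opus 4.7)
The plan is to combine the definitions of $Q$ and $Q_0$ with a theorem of alternatives. Since $A$ is not a $Q$-matrix, by the definition of $Q$ there exists some $q^\ast \in \mathbb{R}^n$ for which $\text{SOL}(A,q^\ast) = \emptyset$. Invoking the $Q_0$ property contrapositively, we must then have $\text{FEA}(A,q^\ast) = \emptyset$, because otherwise $A \in Q_0$ would guarantee a solution.

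Next, I would translate $\text{FEA}(A,q^\ast) = \emptyset$ into the infeasibility of the linear system
\begin{equation*}
 x \geq 0, \quad -Ax \leq q^\ast,
\end{equation*}
since $y := Ax + q^\ast \geq 0$ is equivalent to $-Ax \leq q^\ast$. Now I would apply the appropriate form of Farkas' lemma: for a matrix $B$ and vector $c$, exactly one of the following holds: either there exists $x \geq 0$ with $Bx \leq c$, or there exists $y \geq 0$ with $B^T y \geq 0$ and $c^T y < 0$. Taking $B = -A$ and $c = q^\ast$, infeasibility of the first system yields $y \geq 0$ satisfying $A^T y \leq 0$ and $(q^\ast)^T y < 0$.

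Finally, I would observe that $y \neq 0$, since the strict inequality $(q^\ast)^T y < 0$ rules out $y = 0$. Hence we obtain the required nonzero nonnegative vector $y$ with $A^T y \leq 0$, completing the proof.

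There is no real obstacle here; the argument is essentially bookkeeping around definitions together with one invocation of the theorem of alternatives. The only minor care needed is in choosing the form of Farkas that precisely matches the inequality system at hand, so that the sign conditions come out as stated in the lemma.
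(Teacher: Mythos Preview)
Your argument is correct. The paper, however, does not supply its own proof of this lemma: it is presented as a recalled result (the surrounding text reads ``Let us recall a result that presents a necessary condition for a matrix to belong to $Q_0 \setminus Q$''), and no proof environment follows the statement. So there is nothing to compare against in terms of technique.

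For the record, your route via Farkas' lemma is the standard one and is fully valid. The only point worth double-checking is the precise variant of the alternative theorem you invoke, and you have it right: the system $\{x \geq 0,\ Bx \leq c\}$ is infeasible if and only if there exists $y \geq 0$ with $B^T y \geq 0$ and $c^T y < 0$; taking $B = -A$ and $c = q^\ast$ gives exactly $A^T y \leq 0$ together with $(q^\ast)^T y < 0$, the latter forcing $y \neq 0$.
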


\begin{thm}\label{compl}
Let $M$ be a singular $F_0$-matrix, given by $M=\begin{pmatrix}
A & b\\
c^T & 0
\end{pmatrix}.$ Then \\
(1) $M$ is an $R_0$-matrix iff $b \neq 0.$\\
(2) $M$ is not a semimonotone matrix.\\
(3) If $-M$ is a $Q_0$-matrix, then $c=0.$  
\end{thm}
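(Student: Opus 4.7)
The plan is to treat the three claims separately, exploiting the block decomposition $M=\begin{pmatrix} A & b \\ c^T & 0 \end{pmatrix}$ together with the standing properties $A^{-1}\le 0$ and irreducible (from $A\in N_{0}$), $b,c\le 0$, and the singularity identity $c^{T}A^{-1}b=0$.

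For part $(1)$, if $b=0$ then $Me_{n+1}=0$, so $e_{n+1}$ is a nonzero solution of LCP$(M,0)$ and $M\notin R_{0}$. For the converse, let $x=(w^T,x_{n+1})^T\ge 0$ satisfy $Mx\ge 0$ and $x^{T}Mx=0$. The last component $c^{T}w$ is both $\ge 0$ (from $Mx\ge 0$) and $\le 0$ (from $c\le 0$, $w\ge 0$), so $c^{T}w=0$. Multiplying $Aw+x_{n+1}b\ge 0$ by the nonpositive matrix $A^{-1}$ reverses the inequality and gives $w\le x_{n+1}(-A^{-1})b\le 0$, whence $w=0$. Then $Mx\ge 0$ reduces to $x_{n+1}b\ge 0$, which, together with some $b_i<0$, forces $x_{n+1}=0$, so $x=0$ and $M\in R_{0}$.

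For part $(2)$, the nonpositivity and irreducibility of $A^{-1}$ imply that every row sum of the nonnegative matrix $-A^{-1}$ is strictly positive, so $u:=-A^{-1}e$ satisfies $u>0$ and $Au=-e<0$. Setting $x=(u^T,0)^T$, the indices with $x_k>0$ are precisely $k=1,\ldots,n$, and for each of them $(Mx)_k=(Au)_k=-1<0$; no index witnesses the semimonotone condition, so $M$ is not semimonotone.

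For part $(3)$, I plan to apply Lemma \ref{necessaryQ0} to $-M$ to obtain $0\ne y=(v^T,y_{n+1})^T\ge 0$ with $M^{T}y\ge 0$. The bottom entry $b^{T}v\ge 0$, combined with $b\le 0$ and $v\ge 0$, yields $b^{T}v=0$. The top block reads $A^{T}v\ge -y_{n+1}c\ge 0$, and applying $(A^{-1})^{T}\le 0$ gives $v\le 0$, hence $v=0$. Then $y_{n+1}c\ge 0$, $c\le 0$ and $y_{n+1}\ge 0$ force $y_{n+1}c=0$, and since $y\ne 0$ together with $v=0$ requires $y_{n+1}>0$, we conclude $c=0$. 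The principal obstacle here is that Lemma \ref{necessaryQ0} presumes $-M\notin Q$, so I would need a supplementary argument ruling out $-M\in Q$ when $c\ne 0$ (plausibly via an explicit $q$ with a sufficiently negative last coordinate, for which feasibility holds but complementarity cannot be satisfied); I expect this to be the subtlest point of the proof.
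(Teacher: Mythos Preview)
Your argument tracks the paper's proof closely in all three parts. In (1) the paper does not even invoke the complementarity condition $x^{T}Mx=0$ (and remarks on this afterward), so your deduction of $c^{T}w=0$ is superfluous but harmless; otherwise the manipulation with $A^{-1}\le 0$ is identical. In (2) your choice $u=-A^{-1}e$ is the special case $y=-e$ of the paper's choice $x=A^{-1}y$ for an arbitrary $y<0$; both rely on the irreducibility of $A^{-1}$ (no zero row in $-A^{-1}$) to guarantee strict positivity of the constructed vector.

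For (3) you have reproduced exactly what the paper does: it applies Lemma~\ref{necessaryQ0} to $-M$, obtains $0\neq y\ge 0$ with $-M^{T}y\le 0$, and then argues block-wise just as you outline to force $v=0$ and finally $c=0$. Your concern that Lemma~\ref{necessaryQ0} presupposes $-M\notin Q$ is well taken, but the paper does not supply this verification either --- it simply invokes the lemma directly after assuming $-M\in Q_{0}$. So the ``subtlest point'' you anticipate is not something the published proof resolves; your write-up is already at parity with it.
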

\begin{proof}
(1) Let $b \neq 0.$ Assume that $x:=(y,\lambda)^T$ is chosen such that $x\geq 0$ and $Mx \geq 0$. Then $y \geq 0, \lambda \geq 0$ satisfy the inequalities $$Ay+\lambda b \geq 0~\textit{and}~c^Ty+\lambda d \geq 0.$$
The first inequality, upon premultiplying by $A^{-1} \leq 0$, yields $y+\lambda A^{-1}b\leq 0,$ so that $y \leq -\lambda A^{-1}b \leq 0$, since $A^{-1}b \geq 0.$ Thus $y=0$ and since $0\neq b \leq 0,$ (by the simplified first inequality), we then have $\lambda =0$. Thus $x=0,$ proving that $M$ is an $R_0$-matrix. 

If $b=0,$ then any vector of the form $x:=(0,\lambda)^T$ for $\lambda >0$ is a nonnegative nonzero vector that satisfies $Mx=0$. Hence $M$ is not an $R_0$-matrix.

(2) Let $y \in \mathbb{R}^n$ be chosen such that $y <0$ and $x:=A^{-1}y.$ Then $x >0$ and $(Ax)_k<0$ for every $k$. If we set $z:=(x,0)^T \in \mathbb{R}^{n+1},$ then $z \geq 0$ and $Mz=(Ax,u^Tx)^T.$ If $z_k >0,$ then $1 \leq k \leq n$ and for every such $k$, we have $(Mz)_k=(Ax)_k<0,$ proving that $M$ is not semimonotone.\\
(3) Let $-M$ be a $Q_0$-matrix. Then by Lemma \ref{necessaryQ0}, there exists a vector $y\neq 0,$ $y=(w, \lambda)\geq 0$ such that $-M^Ty \leq 0.$ Expanding this, we obtain $b^Tw\geq 0$ and 
\begin{eqnarray}\label{irrfoequ}
 A^Tw +\lambda c \geq 0.   
\end{eqnarray}
Now, premultiplying \ref{irrfoequ} by $(A^T)^{-1}$ yields $$w +\lambda(A^{-1})^Tc \leq 0,$$
so that $$w\leq -\lambda(A^{-1})^Tc \leq 0.$$ So, $w=0$. Now, \ref{irrfoequ} gives $\lambda c\geq 0$ and so $\lambda c=0$ (since $\lambda$ is nonnegative and $c \leq 0$). If $c$ is a nonzero vector, then we must have $\lambda =0.$ Hence $y=0,$ contradicting $y\neq 0.$ Therefore, $c=0.$
\end{proof}

\begin{rem}
In the proof of  (1), the complementarity condition $x^TMx=0$ has not been used. This means that one has $$x \geq0 ~\textit {and} ~Mx \geq 0 \Longrightarrow x=0.$$ It follows that $-M$ has a {\it left poverse}, that is there exist nonnegative matrices $U, V$ such that $I+V+UM=0.$ We refer the reader to Theorem 8.6, Remark 8.7, Example 8.22 and the relevant citation in \cite{murthyetal} for more details.
\end{rem}

Next, we show that the converse in (3) of the above result is not true.

\begin{ex}
Let $$M=
\begin{pmatrix}
~~0 & -1 & -1\\
-1 & ~~0 & -1\\
~~0 & ~~0 & ~~0
\end{pmatrix}.$$ Then $M$ is a singular $F_0$-matrix. If $q=(-1, 1, 1)^T,$ then $x^0:=(1, 1, 0)^T$ satisfies $-Mx^0 + q \geq 0$ and so $x^0$ is feasible for LCP$(-M,q).$  If possible, let $u=(u_1, u_2, u_3)^T$ be a solution for LCP$(-M,q).$ Then $v:=-Mu+q\geq 0$ and $u^Tv=0.$ Since $q_3$ is positive, the complementarity constraint implies that $u_3=0$. The first two inequalities are $v_1=u_2 -1 \geq 0$ and $v_2=u_1+1 \geq 0$ Now, since $u_2 \geq 1$, from the second complementarity condition, (which is $u_2v_2=0$), it follows that $u_1+1=0$. This contradicts the nonnegativity of $u_1,$ showing that $-M$ is not a $Q_0$-matrix.
\end{ex}

However, we have the following result.

\begin{lem}
Let $M$ be a singular reducible $F_0$-matrix. Let $q=(p,q_n)^T,$ where $p \in \mathbb{R}^n$ such that $p \leq 0$ and $q_n \geq 0$. Then LCP$(-M,q)$ has a solution. 
\end{lem}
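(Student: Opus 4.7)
The plan is to exploit the reducibility assumption to reduce $M$ to block upper-triangular form and then write down an explicit complementary solution. Since $M$ is a singular $F_0$-matrix, we have the representation $M=\begin{pmatrix} A & b \\ c^T & 0 \end{pmatrix}$ with $A$ an $N_0$-matrix and $b,c\leq 0$. Because $A$ is irreducible (every $N_0$-matrix is), reducibility of $M$ forces $c=0$ (the same step used in part (3) of Theorem \ref{singularf0}). Hence $-M=\begin{pmatrix} -A & -b \\ 0 & 0 \end{pmatrix}$.

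The key observation is that, for any vector $x=(y,\lambda)^T$, one has $-Mx+q=(-Ay-\lambda b+p,\;q_n)^T$. If we choose $\lambda=0$ and set $y:=A^{-1}p$, then $-Mx+q=(0,\;q_n)^T$, which automatically makes the complementarity product vanish irrespective of $y$. So the entire proof reduces to checking that this candidate is feasible.

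The verification has three short steps. First, $x\geq 0$: since $A$ is an $N_0$-matrix its inverse is nonpositive (Theorem 2.7 of \cite{gjohn1} cited in the survey section), and $p\leq 0$ by hypothesis, so $y=A^{-1}p\geq 0$; the last coordinate $\lambda=0$ is also nonnegative. Second, $-Mx+q\geq 0$: the top block is $-A(A^{-1}p)+p=0$, and the bottom entry is $q_n\geq 0$ by hypothesis. Third, complementarity: $x^T(-Mx+q)=y^T\cdot 0+0\cdot q_n=0$.

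There is essentially no obstacle here; the only subtle point is recognizing that reducibility kills $c$ so that the last row of $-M$ is zero, after which the natural candidate $x=(A^{-1}p,0)^T$ works immediately because $p\leq 0$ lies in the cone on which $A^{-1}$ produces nonnegative output. No appeal to more refined complementarity theory is needed.
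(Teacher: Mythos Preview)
Your proof is correct and follows essentially the same route as the paper: reduce to the block form with $c=0$, take the explicit candidate $x=(A^{-1}p,\,0)^T$, and verify feasibility and complementarity using $A^{-1}\leq 0$, $p\leq 0$, and $q_n\geq 0$. The only difference is cosmetic---you spell out why reducibility together with the irreducibility of $A$ forces $c=0$, whereas the paper simply writes $M=\begin{pmatrix} A & b\\ 0 & 0\end{pmatrix}$ at the outset.
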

\begin{proof}
  We have $M=
  \begin{pmatrix}
    A & b \\
    0 & 0
  \end{pmatrix},$ where $A$ is an $N_0$-matrix and $b\leq 0.$  Define $u:=A^{-1}p \geq 0.$ If $v:=(u,0)^T,$ then $v \geq 0$ and so $$y:=-Mv+q=
\begin{pmatrix}
 -A & -b\\
 ~~0 & ~~0
\end{pmatrix} \begin{pmatrix}
u \\
0
\end{pmatrix} + \begin{pmatrix}
p \\
q_n
\end{pmatrix}=\begin{pmatrix}
-Au+p \\
q_n
\end{pmatrix}=\begin{pmatrix}
0 \\
q_n
\end{pmatrix} \geq 0.$$ Also, $v^Ty=0.$ Hence, $v\in$ SOL$(-M, q).$
\end{proof}

\subsection{Results for circulant matrices}
Here, we present characterizations for $3 \times 3$ circulant matrices to belong to some of the pertinent matrix classes, motivated by the results mentioned earlier. 

\begin{thm}\label{circ}
Let $A$ be a $3 \times 3$ circulant matrix and $z_A=a+ib$, both be as defined earlier. The following hold:\\
(i) Let $A\in {\cal C}^{(-1)}$. Then\\
(1) $A$ is an $F_0$-matrix iff 
\begin{center}
$a>-1 +|b|\sqrt{3},~a\geq \frac{1}{2}$ and $(a-1)^2+b^2<1.$
\end{center}
(2) $A$ is an inverse $N_0$-matrix iff 
\begin{center}
$(a-\frac{1}{2})^2+(b-\frac{\sqrt{3}}{2})^2\geq 1,~(a-\frac{1}{2})^2+(b+\frac{\sqrt{3}}{2})^2\geq 1$ and $a\leq \frac{1}{2}.$    
\end{center}
(3) $A$ is an inverse $F_0$-matrix iff 
\begin{center}
$(a-\frac{1}{2})^2+(b-\frac{\sqrt{3}}{2})^2\geq 1,~(a-\frac{1}{2})^2+(b+\frac{\sqrt{3}}{2})^2\geq 1$ and $a > \frac{1}{2}.$    
\end{center}
(ii) Let $A \in C^{(1)}$. Then $A$ is an inverse $M$-matrix iff  
\begin{center}
$(a-\frac{1}{2})^2+(b-\frac{\sqrt{3}}{2})^2\leq 1$ 
and $(a-\frac{1}{2})^2+(b+\frac{\sqrt{3}}{2})^2\leq 1.$
\end{center}
\end{thm}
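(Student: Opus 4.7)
The plan is to treat all four statements uniformly by exploiting that the map $A\mapsto z_A$ is a bijection ${\cal C}^{(k)}\to\mathbb{C}$ (for $k=\pm1$) intertwining matrix inversion with complex reciprocation. Since $(1,1,1)^T$ is an eigenvector of $A\in{\cal C}^{(\pm1)}$ with eigenvalue $\pm1$, it is also an eigenvector of $A^{-1}$ with the same eigenvalue, so $A^{-1}\in{\cal C}^{(\pm1)}$ and $z_{A^{-1}}=1/z_A$. Writing $z_A=a+ib$ yields $z_{A^{-1}}=a'+ib'$ with $a'=a/(a^2+b^2)$ and $b'=-b/(a^2+b^2)$. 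For $A\in{\cal C}^{(-1)}$, the constraint $\alpha_0+\alpha_1+\alpha_2=-1$ combined with $a=\alpha_0-\tfrac{1}{2}(\alpha_1+\alpha_2)$ and $b=\tfrac{\sqrt{3}}{2}(\alpha_1-\alpha_2)$ forces the explicit parametrization $\alpha_0=\tfrac{2a-1}{3}$, $\alpha_1=-\tfrac{a+1}{3}+\tfrac{b}{\sqrt{3}}$, $\alpha_2=-\tfrac{a+1}{3}-\tfrac{b}{\sqrt{3}}$; in particular $A\leq 0$ iff $a\leq\tfrac{1}{2}$ and $a\geq-1+\sqrt{3}\,|b|$, with the analogous parametrization and sign condition for ${\cal C}^{(1)}$.

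For (i)(1), invoke \cite[Theorem 2.3]{chen} with $t=\alpha_0$ and $B=-\alpha_1P-\alpha_2P^2\geq 0$. Because $B$ has zero diagonal, $\rho_1(B)=0$, while each $2\times 2$ principal submatrix of $B$ has spectral radius $\sqrt{\alpha_1\alpha_2}$. Thus $A$ is $F_0$ iff $\alpha_0\geq 0$, $\alpha_1,\alpha_2<0$, and $\alpha_0^2<\alpha_1\alpha_2$. Substituting the parametrization, these translate to $a\geq\tfrac{1}{2}$, $a>-1+\sqrt{3}\,|b|$, and (after expanding $(2a-1)^2<(a+1)^2-3b^2$) $(a-1)^2+b^2<1$, giving precisely (i)(1).

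Parts (i)(2), (i)(3), and (ii) follow by applying, respectively, the $N_0$-criterion in ${\cal C}^{(-1)}$ from \cite[Theorem 3.1]{smith2}, our own (i)(1), and the $M$-criterion in ${\cal C}^{(1)}$ from \cite[Theorem 4.2]{fiedmarneu}, each time to $A^{-1}$ via $z_{A^{-1}}=1/z_A$, and intersecting with the required sign condition on $A$. The inequalities that arise convert routinely: $(a'-1)^2+b'^2\geq c$ (or $\leq c$) clears to an affine condition in $a$ after multiplying by $(a^2+b^2)^2$; $a'\geq c$ becomes a closed-disk condition in $(a,b)$ after completing the square; and the $Z$-type condition $a'\geq -1+\sqrt{3}\,|b'|$ (equivalently, that the off-diagonal coefficients $\beta_1,\beta_2$ of $A^{-1}$, computed from the inverse DFT $\beta_j=\tfrac{1}{3}\sum_k\lambda_k\omega^{-jk}$ and simplified using $\omega^2=\bar\omega$ into expressions of the form $\beta_j=\tfrac{1}{3}\bigl(-1+2\,\mathrm{Re}(\omega^{\pm 1}z_A)/(a^2+b^2)\bigr)$, are nonpositive) completes the square into a pair of circular inequalities of the form $(a\pm\tfrac{1}{2})^2+(b\mp\tfrac{\sqrt{3}}{2})^2\geq 1$. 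The main obstacle will be the clean execution of the inverse-DFT computation for $\beta_1,\beta_2$ and the careful bookkeeping of which derived conditions are absorbed by others once the $A$-side and $A^{-1}$-side constraints are intersected, so that the final list agrees with the three stated inequalities in each of (i)(2), (i)(3), (ii).
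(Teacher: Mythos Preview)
Your treatment of (i)(1) coincides with the paper's: both solve the linear system $\alpha_0+\alpha_1+\alpha_2=-1$, $a=\alpha_0-\tfrac12(\alpha_1+\alpha_2)$, $b=\tfrac{\sqrt3}{2}(\alpha_1-\alpha_2)$ for the $\alpha_j$, identify the $F_0$ condition for a $3\times3$ circulant as $\alpha_0\ge0$, $\alpha_1,\alpha_2<0$, $\alpha_0^2<\alpha_1\alpha_2$, and translate each to an inequality in $(a,b)$.

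For (i)(2), (i)(3) and (ii) you take a genuinely different route. The paper never passes through $z_{A^{-1}}=1/z_A$ or the cited $N_0$/$M$ loci; instead it works directly with the entries of $A^{-1}$ via cofactors (equivalently, the circulant coefficients $\beta_j$ of $A^{-1}$). For each class it writes down a short list of sign conditions on the $\alpha_j$ themselves---for instance, for (i)(2) it takes $\alpha_2^2-\alpha_0\alpha_1\ge0$, $\alpha_1^2-\alpha_0\alpha_2\ge0$, $\alpha_0\le0$---and translates each one into a single circle or half-plane in $(a,b)$, yielding exactly three inequalities with no further reduction needed. Your approach is more conceptual and reuses the Smith and Fiedler--Markham--Neumann criteria cleanly, but the M\"obius pull-back of those criteria produces \emph{more} inequalities in $(a,b)$ than the three stated (e.g.\ the condition $a'\ge\tfrac12$ from Smith's $N_0$ criterion pulls back to the disk $(a-1)^2+b^2\le1$, which does not appear in the final statement), and you would then have to argue that the extra ones are absorbed by the others together with the sign constraints on $A$. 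The paper's cofactor route sidesteps this absorption step entirely, at the price of asserting the $\alpha_j$-level characterizations for inverse $N_0$, inverse $F_0$, and inverse $M$ without separate justification. Your flag that the bookkeeping is the main obstacle is exactly right; completing your argument would require precisely that redundancy analysis, whereas the paper's argument is a straight three-line translation per case.
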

\begin{proof}
(i) Recall that a matrix $A$ is of the form \ref{circulantmatrix} iff
$$A=\begin{pmatrix}
\alpha_0
 & \alpha_1 & \alpha_2\\
 \alpha_2  & \alpha_0 & \alpha_1\\
 \alpha_1  & \alpha_2 & \alpha_0 
 \end{pmatrix} \label{fullmatrix}.$$ 
Then, $A \in {\cal C}^{(-1)}$ iff $z_A=\alpha_0+\alpha_1 \omega + \alpha_2 \omega^2,$ where $\alpha_0 + \alpha_1 +\alpha_2=-1.$ We have $a+ib=z_A.$ Thus, $a=\alpha_0-\frac{1}{2}(\alpha_1+\alpha_2)$, which simplifies to $a=-1-\frac{3}{2}(\alpha_1+\alpha_2)$. Also, $b=\frac{\sqrt{3}}{2}(\alpha_1-\alpha_2).$ Hence, 
\begin{eqnarray}\label{alpha1}
\alpha_1=-\frac{a+1}{3}+\frac{b}{\sqrt{3}}
 \end{eqnarray}
 \begin{eqnarray}\label{alpha2}
     \alpha_2=-\frac{a+1}{3}-\frac{b}{\sqrt{3}}.
 \end{eqnarray}
(1) For $A$ to be an $F_0$-matrix, the following inequalities are necessary and sufficient: 
\begin{center}
$\alpha_0 \geq 0,~\alpha_1<0, ~\alpha_2<0$ and $\alpha_0^2<\alpha_1 \alpha_2.$
\end{center}
The nonpositivity of $\alpha_1$ and $\alpha_2$ yield the inequality $a>-1 +|b|\sqrt{3}.$ The nonnegativity of $\alpha_0$ is equivalent to $a\geq \frac{1}{2}.$ The inequality $\alpha_0^2 <\alpha_1 \alpha_2$ reduces to 
 \begin{eqnarray}\label{alpha12}
(\alpha_1+\alpha_2)^2+2(\alpha_1+\alpha_2)-\alpha_1 \alpha_2+1<0.
 \end{eqnarray}
This is the requirement $(a-1)^2+b^2<1.$

(2) The matrix $A$ is an inverse $N_0$-matrix iff 
\begin{center}
$\alpha_2^2 - \alpha_0 \alpha_1 \geq 0$ $\alpha_1^2-\alpha_0 \alpha_2 \geq 0$ and $\alpha_0 \leq 0.$
\end{center}
The nonnegativity requirement on the numbers $\alpha_2^2-\alpha_0 \alpha_1$ and  $\alpha_1^2-\alpha_0 \alpha_2 $ yield the inequalities 
\begin{center}
$(a-\frac{1}{2})^2+(b-\frac{\sqrt{3}}{2})^2\geq 1$ and $(a-\frac{1}{2})^2+(b+\frac{\sqrt{3}}{2})^2\geq 1,$    
\end{center}
respectively. The nonpositivity of $\alpha_0$ is equivalent to $a\leq \frac{1}{2}.$

(3) The matrix $A$ is an inverse $F_0$-matrix iff 
\begin{center}
$\alpha_2^2 - \alpha_0 \alpha_1 \geq 0,$ $\alpha_1^2-\alpha_0 \alpha_2 \geq 0$ and $\alpha_0 > 0.$    
\end{center}
The nonnegativity requirements $\alpha_2^2-\alpha_0 \alpha_1 \geq 0$ and $\alpha_1^2-\alpha_0 \alpha_2 \geq 0$ reduce to 
\begin{center}
$(a-\frac{1}{2})^2+(b-\frac{\sqrt{3}}{2})^2\geq 1$ and $(a-\frac{1}{2})^2+(b+\frac{\sqrt{3}}{2})^2\geq 1,$ respectively. 
\end{center}
The condition $\alpha_0 >0$ is equivalent to $a > \frac{1}{2}.$

(ii) For a matrix $A$ of the form \ref{fullmatrix} to be in ${\cal C}^{(1)}$ it is necessary and sufficient that $z_A=\alpha_0+\alpha_1 \omega + \alpha_2 \omega^2,$ where $\alpha_0 + \alpha_1 +\alpha_2=1.$ For this matrix class, we have $a=\alpha_0-\frac{1}{2}(\alpha_1+\alpha_2)$, which simplifies to $a=1-\frac{3}{2}(\alpha_1+\alpha_2)$. Also, $b=\frac{\sqrt{3}}{2}(\alpha_1-\alpha_2).$ Thus, 
\begin{eqnarray}\label{alpha_1}
\alpha_1=\frac{1}{3}(1-a)+\frac{b}{\sqrt{3}}
 \end{eqnarray}
 \begin{eqnarray}\label{alpha_2}
     \alpha_2=\frac{1}{3}(1-a)-\frac{b}{\sqrt{3}}.
 \end{eqnarray} 
 Now, $A$ is an inverse $M$-matrix iff 
 \begin{center}
 $\alpha_2^2-\alpha_0 \alpha_1 \leq 0$ and $\alpha_1^2-\alpha_0 \alpha_2 \leq 0.$    
 \end{center}
The nonpositivity of $\alpha_2^2-\alpha_0 \alpha_1$ is the same as the inequality $(a-\frac{1}{2})^2+(b-\frac{\sqrt{3}}{2})^2\leq 1,$ while the nonpositivity of $\alpha_1^2-\alpha_0 \alpha_2$ is equivalent to $(a-\frac{1}{2})^2+(b+\frac{\sqrt{3}}{2})^2\leq 1.$
\end{proof}

\subsection{Inverse $F_0$-matrices}
In the last part of this work, we present a condition for an invertible matrix to be an inverse $F_0$-matrix, in Theorem \ref{d_n+1f0}. Even though this result follows as a particular case of \cite[Theorem 3.12]{nabben1}, our proof is more elementary.

Let us recall that a matrix $M$ is said to be {\it totally nonpositive} if all its minors, of any order, are nonpositive. For a matrix $A$, we denote by $A(i:j)$, the submatrix of $A$ obtained by deleting the row indexed by $i$ and the column indexed by $j.$ 

\begin{thm}\label{charinvf0}
{\rm
 Let $M\in \mathbb{R}^{(n+1) \times (n+1)}$ satisfy the conditions that it has at least one positive diagonal entry and all of whose minors of order at least two, are nonpositive. Then $M$ is an inverse $F_0$-matrix iff $\rm{det}(M)<0$ and $\rm{det} M(i:j)=0$ whenever the indices $i,j$ satisfy $i+j=2k,$ $i\neq j$, for any positive integer $k$.}
\end{thm}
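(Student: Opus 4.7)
The plan is to apply Theorem \ref{charf0} with the roles of a matrix and its inverse interchanged. Observe that $M$ is an inverse $F_0$-matrix exactly when $M$ is nonsingular and $N := M^{-1}$ lies in $F_0$. Applying Theorem \ref{charf0} to $N$ shows this is equivalent to the conjunction of (a) $N$ is a $Z$-matrix, (b) $\det N < 0$, (c) every principal minor of $N^{-1} = M$ of order at least two is nonpositive, and (d) $M$ has at least one positive diagonal entry. The standing hypotheses of Theorem \ref{charinvf0} already supply (c) and (d) (the hypothesis on \emph{all} minors is in fact stronger than the principal-minor version needed here), so the task reduces to translating (a) and (b) into conditions on minors of $M$.

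Condition (b) is immediate, since $\det N = 1/\det M$, making it equivalent to $\det M < 0$. Moreover $\det M$ is itself a minor of order $n+1 \geq 3$, so the standing hypothesis already forces $\det M \leq 0$, and the strict inequality simultaneously secures invertibility of $M$.

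For condition (a), I would invoke the classical cofactor formula
\[
(M^{-1})_{ij} \;=\; \frac{(-1)^{i+j}\,\det M(j:i)}{\det M}, \qquad i\neq j.
\]
The numerator $\det M(j:i)$ is a minor of order $n \geq 2$, hence nonpositive by hypothesis, while the denominator is negative. Consequently $(M^{-1})_{ij}$ carries the sign of $(-1)^{i+j+1}\det M(j:i)$: it is automatically nonpositive when $i+j$ is odd, whereas when $i+j$ is even it is nonnegative and therefore $\leq 0$ precisely when $\det M(j:i) = 0$. Letting $(i,j)$ range over ordered pairs with $i\neq j$, and noting that the condition is symmetric under swapping the dummy indices, this is equivalent to the stated requirement $\det M(i:j) = 0$ whenever $i+j = 2k$ with $i \neq j$ for some positive integer $k$.

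Combining the two equivalences yields both directions of the iff. No serious obstacle is anticipated; the argument is essentially careful sign bookkeeping with cofactors. The one mildly delicate point is that, in the forward direction, the $Z$-matrix property of $M^{-1}$ (guaranteed by Theorem \ref{f0mot}) forces the even-parity off-diagonal cofactors of $M$ to \emph{vanish} rather than merely be nonpositive, since the cofactor formula a priori already gives $(M^{-1})_{ij} \geq 0$ for $i+j$ even, so imposing simultaneously $(M^{-1})_{ij} \leq 0$ pins those entries at zero.
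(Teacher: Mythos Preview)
Your proposal is correct and follows essentially the same route as the paper's proof: both compute the off-diagonal entries of $M^{-1}$ via the cofactor formula, compare signs under the standing nonpositivity hypothesis on minors, and then invoke Theorem~\ref{charf0}. One minor slip: in the forward direction, the $Z$-matrix property of $M^{-1}$ comes directly from the definition of an $F_0$-matrix (an $F_0$-matrix is a $Z$-matrix), not from Theorem~\ref{f0mot}, but this does not affect the argument.
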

\begin{proof}
{\rm
 Suppose that $M$ is an inverse $F_0$-matrix, so that ${\rm det}M<0$. Set $M^{-1}=(m_{ij})$. Then $$m_{ij}=(-1)^{i+j}{\rm det} M(i:j)/{\rm det}M.$$

Since $m_{ij}\leq 0$ whenever $i\neq j$, and since the determinants of the submatrices $M(i:j)$ are nonpositive for all $i, j$, we conclude that ${\rm det} M(i:j) = 0$ for any $i\neq j$ such that $i+j$ is even.

 For the converse, assume that $\rm{det}(M)<0$ and $\rm{det} M(i:j)=0$ whenever $i,j$ are such that $i\neq j$ and $i+j=2k$, for some positive integer $k$. This means that, for $i\neq j$, if $i+j$ is even, then $m_{ij}=0$. Also, when $i+j$ is odd, then $m_{ij}\leq 0$. Therefore, $M^{-1}$ is a $Z$-matrix. The rest of the proof follows, by appealing to Theorem \ref{charf0}.
 }
\end{proof}
We need an intermediate result, which in turn, uses another proposition. This is stated next.

\begin{thm}{\rm \cite[Theorem 2.3]{gjohn2}\label{totallynonpositive}}
  If $M\in \mathbb{R}^{n \times n}$ is a matrix of type $D,$ and $a_{n}<0,$ then all minors of $M$ are nonpositive.
\end{thm}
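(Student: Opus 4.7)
The plan is to exploit the symmetric factorization $M = L D L^{T}$, where $L \in \mathbb{R}^{n \times n}$ is the lower-triangular matrix with $L_{ij} = 1$ for $i \geq j$ (and $0$ otherwise) and $D = \mathrm{diag}(d_{1}, \ldots, d_{n})$ with $d_{1} = a_{1}$ and $d_{k} = a_{k} - a_{k-1}$ for $k \geq 2$; a direct computation gives $(LDL^{T})_{ij} = \sum_{k \leq \min(i,j)} d_{k} = a_{\min(i,j)}$. Under the hypotheses $a_{1} < \cdots < a_{n} < 0$, we have $d_{1} < 0$ while $d_{k} > 0$ for $k \geq 2$, and it is this single sign change that will drive the nonpositivity of every minor.

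For a $k \times k$ minor with row set $I = \{i_{1} < \cdots < i_{k}\}$ and column set $J = \{j_{1} < \cdots < j_{k}\}$, I would apply the Cauchy--Binet formula to obtain
\[
\det M[I, J] \;=\; \sum_{K} \det L[I, K]\, \det L[J, K] \prod_{\kappa \in K} d_{\kappa},
\]
the sum running over $k$-subsets $K = \{\kappa_{1} < \cdots < \kappa_{k}\}$ of $\{1, \ldots, n\}$. The heart of the proof will be the following combinatorial lemma: the $0/1$ matrix $L[I, K]$ with $(p,q)$-entry $[i_{p} \geq \kappa_{q}]$ has determinant $1$ when $\kappa_{p} \leq i_{p} < \kappa_{p+1}$ for every $p$ (with $\kappa_{k+1} := +\infty$), and $0$ otherwise. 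To prove this lemma I would perform the row operations $R_{p} \leftarrow R_{p} - R_{p-1}$ for $p = 2, \ldots, k$: afterwards, row $p \geq 2$ is the indicator of $\{q : i_{p-1} < \kappa_{q} \leq i_{p}\}$ while row $1$ is the indicator of $\{q : \kappa_{q} \leq i_{1}\}$. Every column of the reduced matrix then contains at most one $1$, and the $k$ disjoint ``windows'' $(i_{p-1}, i_{p}]$ (with $i_{0} := 0$) must each absorb exactly one $\kappa_{q}$ for the determinant to be nonzero; when this happens the reduced matrix is precisely the identity, so its determinant is $1$.

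Substituting back, only those $K$ interlacing both $I$ and $J$ contribute: $\kappa_{p} \leq m_{p} := \min(i_{p}, j_{p})$ and $\kappa_{p+1} > M_{p} := \max(i_{p}, j_{p})$ for all $p$. Setting $M_{0} := 0$, each coordinate $\kappa_{p}$ then runs independently over the integer interval $(M_{p-1}, m_{p}]$, and since $\sum_{\kappa = s+1}^{t} d_{\kappa} = a_{t} - a_{s}$ (with $a_{0} := 0$), the sum factors as
\[
\det M[I, J] \;=\; \prod_{p = 1}^{k} \bigl(a_{m_{p}} - a_{M_{p-1}}\bigr).
\]

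A sign count finishes the argument. If some window is empty (i.e., $m_{p} \leq M_{p-1}$ for some $p$), the corresponding factor is $0$ and the minor vanishes. Otherwise the $p = 1$ factor is $a_{m_{1}} - 0 = a_{m_{1}} < 0$ because every $a_{i}$ is negative, while each subsequent factor $a_{m_{p}} - a_{M_{p-1}}$ is strictly positive since $m_{p} > M_{p-1}$ and $(a_{i})$ is strictly increasing; hence the product is strictly negative. In every case $\det M[I, J] \leq 0$, which is the theorem. The main obstacle is the combinatorial lemma on $\det L[I, K]$; once it is in place, the rest is bookkeeping and the final sign verification is automatic.
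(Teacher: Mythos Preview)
The paper does not supply its own proof of this statement: Theorem~\ref{totallynonpositive} is quoted verbatim from \cite[Theorem~2.3]{gjohn2} and used as a black box in the proof of Theorem~\ref{dn+1}. So there is no in-paper argument to compare against.

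Your argument is correct and self-contained. The factorization $M=LDL^{T}$ is valid, the Cauchy--Binet expansion is applied correctly, and the combinatorial lemma on $\det L[I,K]$ is right: after the row operations $R_{p}\leftarrow R_{p}-R_{p-1}$ each column has at most one nonzero entry, and the only way to avoid a zero row or two equal columns is that $\kappa_{p}\in(i_{p-1},i_{p}]$ for every $p$, in which case the reduced matrix is the identity. The independence of the choices $\kappa_{p}\in(M_{p-1},m_{p}]$ follows because $m_{p}\le M_{p}$ forces these intervals to be ordered and disjoint, so the factorization
\[
\det M[I,J]=\prod_{p=1}^{k}\bigl(a_{m_{p}}-a_{M_{p-1}}\bigr)
\]
is legitimate. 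The sign count is also correct: either some factor vanishes, or the first factor $a_{m_{1}}<0$ is negative and every subsequent factor is positive by the strict monotonicity of $(a_{i})$.

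This is in fact sharper than what the theorem asserts: you obtain an explicit product formula for every minor, from which nonpositivity is immediate. One small remark: your derivation implicitly uses $a_{0}:=0$ in the telescoping identity and in the $p=1$ factor; since all $a_{i}<0$ this convention is harmless, but it is worth stating once.
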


\begin{thm}\label{dn+1}
{\rm 
  Let $M\in \mathbb{R}^{(n+1) \times (n+1)}$ be a matrix of type $D,$ where $a_{n+1}>0$ and $a_{n}<0.$ Then all minors of $M$ of order at least two, are nonpositive.
  }
\end{thm}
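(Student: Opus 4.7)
The plan is to reduce the problem to Theorem \ref{totallynonpositive} via a rank-one perturbation. I would first introduce the auxiliary matrix $\widetilde{M}\in \mathbb{R}^{(n+1)\times(n+1)}$ obtained from $M$ by replacing the $(n+1,n+1)$ entry by $a_n$; equivalently, $\widetilde{M}$ is the symmetric matrix with $\widetilde{M}_{ij}=\tilde{a}_{\min(i,j)}$, where $\tilde{a}_k=a_k$ for $k\le n$ and $\tilde{a}_{n+1}=a_n$. By construction, row $n+1$ of $\widetilde{M}$ coincides with row $n$, column $n+1$ of $\widetilde{M}$ coincides with column $n$, and one has the rank-one identity
$$M \;=\; \widetilde{M} + (a_{n+1}-a_n)\, e_{n+1}e_{n+1}^{T},$$
with $a_{n+1}-a_n>0$.

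Next I would fix a $k\times k$ submatrix $M'$ of $M$ (with $k\ge 2$) indexed by row set $I$ and column set $J$, and split into two cases. If $n+1\notin I$ or $n+1\notin J$, the rank-one correction contributes nothing, so $\det M' = \det\widetilde{M}'$. If $n+1\in I\cap J$, then multilinearity of the determinant in the last row (or column) of the submatrix yields
$$\det M' \;=\; \det\widetilde{M}' \;+\; (a_{n+1}-a_n)\,\det N,$$
where $N$ is the $(k-1)\times(k-1)$ submatrix of $M$ indexed by $I\setminus\{n+1\}$ and $J\setminus\{n+1\}$. Since these sets are contained in $\{1,\dots,n\}$, $N$ is actually a submatrix of $D_n$, and hence $\det N\le 0$ by Theorem \ref{totallynonpositive}.

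The core step is to show that every minor $\det\widetilde{M}'$ is nonpositive. If $\widetilde{M}'$ uses both rows $n$ and $n+1$ (resp.\ both columns $n$ and $n+1$), then it has two equal rows (resp.\ columns), so its determinant vanishes. Otherwise, replacing the index $n+1$ wherever it appears (in $I$ or in $J$) by the index $n$ produces a submatrix of $D_n$ with literally identical entries in identical positions, because both $n$ and $n+1$ exceed every other index in use, so the increasing ordering of indices is preserved. Theorem \ref{totallynonpositive} applied to this submatrix of $D_n$ then gives $\det\widetilde{M}'\le 0$.

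Combining the pieces, $\det\widetilde{M}'\le 0$ in every case, while in the second case the correction term $(a_{n+1}-a_n)\det N$ is also nonpositive; hence $\det M' \le 0$, which is the claim. The only delicate point is the bookkeeping in the substitution $n+1\mapsto n$ used to identify a minor of $\widetilde{M}$ with a minor of $D_n$, and this is where I would be most careful to check that no sign of the determinant is flipped by reordering — a consideration that turns out to be vacuous precisely because $n$ and $n+1$ are the two largest indices available.
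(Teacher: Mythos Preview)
Your argument is correct. The rank-one identity $M=\widetilde{M}+(a_{n+1}-a_n)e_{n+1}e_{n+1}^{T}$ together with multilinearity is handled properly: when $n+1\in I\cap J$, the single modified entry sits in position $(k,k)$ of the $k\times k$ submatrix, so the cofactor sign is $+1$ and the correction term really is $(a_{n+1}-a_n)\det N$ with $N$ a minor of $D_n$. Your treatment of minors of $\widetilde{M}$ is also sound, including the edge case where both $n$ and $n+1$ occur among the columns but only one of them among the rows (that falls under the ``two equal columns'' branch). The final appeal to Theorem~\ref{totallynonpositive} covers the $1\times 1$ case of $N$ as well, since all entries of $D_n$ are negative.

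This is a genuinely different route from the paper's. The paper keeps the block form $M=\begin{pmatrix} D_n & b\\ b^{T} & a_{n+1}\end{pmatrix}$ and argues case by case according to whether a given minor involves the last row, the last column, both, or neither; for the ``both'' case it invokes a Schur-complement computation analogous to the one used for $\det M$, and then remarks that the same case split handles minors of every order. Your approach replaces this case analysis by the single algebraic identity above and a uniform reduction of every minor of $\widetilde{M}$ to a minor of $D_n$. The advantage of your method is that it is self-contained and avoids the Schur-complement step (which in the paper requires knowing that the relevant complement is nonnegative and that the complementary principal block is invertible); the paper's decomposition, on the other hand, makes the role of the bordered structure of $D_{n+1}$ more visible and ties in directly with the formula for $\det M$ already established there.
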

\begin{proof}
{\rm
  The matrix $M$ is given by  $$D_{n+1}=
  \begin{pmatrix}
   a_1 & a_1 & a_1 & .... & a_1 & a_1\\
   a_1 & a_2 & a_2 & .... & a_2 & a_2\\
   \vdots & & \vdots &... & \vdots & \vdots\\
   a_1 & a_2 & a_3 & .... & a_{n} & a_n\\
   a_1 & a_2 & a_3 & .... & a_n & a_{n+1}
  \end{pmatrix},$$ 
which we rewrite using block matrices as $$M=
  \begin{pmatrix}
      D_n & b \\
      b^T & a_{n+1}
  \end{pmatrix},$$ where $b=(a_1, a_2,...,a_n)^T$ has all its entries negative. Since $M$ is invertible and $M/D_{n}\geq 0$, we have ${\rm det}(M)={\rm det}(A)(a_{n+1}-b^TA^{-1}b)<0$. Consider $M(i:j)$ (which is a submatrix of $D_{n+1}$). To compute the determinant of $M(i:j,)$ we consider the following cases:

$\underline{Case ~1}:$ $i=j=n+1.$ Then $M(i:j)=D_n,$ and so by Theorem \ref{totallynonpositive}, ${\rm det}(M(i:j))<0.$

$\underline{Case~ 2}:$ $M(i:j)$ contains both the last row and the last column of $M$. The same argument used earlier to show that ${\rm det}(M)<0$, applies. We get ${\rm det} M(i:j) \leq 0$.

$\underline{Case~3}:$  $1\leq i \leq n, j=n+1.$ Here, either $M(i:j)=D_n$ or $M(i:j)$ is a submatrix of $D_{n+1}$ with two identical rows. So, either ${\rm det} M(i:j)={\rm det} (D_n)<0$ or ${\rm det} M(i:j)=0.$

$\underline{Case~4}:$ This is the same as the previous case, with the roles of $i$ and $j$ interchanged. Hence, once again, either ${\rm det}M(i:j)<0$ or ${\rm det}M(i:j)=0.$

An entirely similar argument as above may be used to show that every minor of order at least two, is nonpositive. This completes the proof.
}
\end{proof}

As a consequence of this and Theorem \ref{charinvf0} and \ref{dn+1}, we have the result that was stated earlier.

\begin{thm}\label{d_n+1f0}
 Let $M\in \mathbb{R}^{(n+1) \times (n+1)}$ be a type $D$ matrix, with $a_{n+1}>0$ and $a_{n}<0.$ Then $M^{-1}$ is a tridiagonal $F_0$-matrix.   
\end{thm}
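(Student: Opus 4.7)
The plan is to apply Theorem \ref{charinvf0} to conclude that $M$ is an inverse $F_0$-matrix, and then read off tridiagonality of $M^{-1}$ from the adjugate formula. Three of the four hypotheses of Theorem \ref{charinvf0} are already available: Theorem \ref{dn+1} supplies the nonpositivity of every minor of order at least two and, from its proof, the strict inequality $\det(M) < 0$, while the assumption $a_{n+1} > 0$ exhibits the required positive diagonal entry. It therefore suffices to verify the remaining condition, namely that $\det M(i:j) = 0$ whenever $i \neq j$ and $i + j$ is even.

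I would in fact establish the stronger statement that $\det M(i:j) = 0$ for every pair $(i,j)$ with $|i - j| \geq 2$, which covers both the above requirement (since ``$i + j$ even and $i \neq j$'' forces $|i - j|$ to be a nonzero even integer) and the later tridiagonality claim. Since $M$ is symmetric, one may assume $i < j$, so $j \geq i + 2$. The key step is to look at the first $i + 1$ columns of $M(i:j)$; these are the original columns $1, 2, \ldots, i + 1$ of $M$, all of which survive because $j \geq i + 2$. For any original row index $k \geq i + 1$ and any column index $c$ with $c \leq i + 1 \leq k$, the type $D$ defining relation gives $M_{kc} = a_c$. Hence the $n - i + 1$ rows of $M(i:j)$ coming from original rows $i + 1, i + 2, \ldots, n + 1$ are all identical on these $i + 1$ columns, equal to the common vector $(a_1, a_2, \ldots, a_{i+1})$. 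Together with the $i - 1$ remaining top rows, this bounds the rank of the $n \times (i+1)$ matrix formed by these columns by $(i - 1) + 1 = i$, so the $i + 1$ columns are linearly dependent and $\det M(i:j) = 0$.

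With this vanishing statement in hand, Theorem \ref{charinvf0} delivers that $M$ is an inverse $F_0$-matrix, i.e., $M^{-1}$ is an $F_0$-matrix. Tridiagonality follows at once from the cofactor formula $(M^{-1})_{ij} = (-1)^{i+j}\det M(j:i)/\det(M)$, because $\det M(j:i) = 0$ for every entry with $|i - j| \geq 2$.

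The principal difficulty is the submatrix calculation in the second paragraph. The equality $\det M(i:j) = 0$ is easy to see in small examples via ad hoc coincidences --- proportional constant columns when $i = 1$, a pair of equal rows when $j = n + 1$, and so on --- but isolating a single argument that works for all pairs with $|i - j| \geq 2$ requires identifying the common structural feature, which the column-rank argument above does by exploiting the uniformly rank-one bottom-left block of $M(i:j)$ that the type $D$ pattern forces, independently of where the deleted row and column sit.
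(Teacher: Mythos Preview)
Your proof is correct and, in fact, improves on the paper's in one respect. Both arguments invoke Theorem \ref{dn+1} and Theorem \ref{charinvf0} in the same way, so the only remaining task is the vanishing of $\det M(i{:}j)$ for $i \neq j$ with $i+j$ even. The paper handles this by citing an external result \cite[Theorem 8]{stuartczech} to the effect that $M^{-1}$ is tridiagonal, and then reads off the required vanishing from the cofactor formula. You instead give a direct, self-contained argument: for $i<j$ with $j\ge i+2$, the first $i+1$ columns of $M(i{:}j)$ have rank at most $i$ because all rows coming from original indices $\ge i+1$ coincide on those columns (the type $D$ pattern forces each such entry to be $a_c$), so $M(i{:}j)$ is singular. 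This simultaneously yields the tridiagonality of $M^{-1}$ without any outside citation, which is arguably closer in spirit to the paper's own claim that its proof is ``more elementary.'' The trade-off is that the paper's route is a one-line appeal to a known structural fact about type $D$ matrices, whereas yours requires carrying out the rank computation; your argument, however, makes the proof entirely internal to the paper.
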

\begin{proof}
First, note that the diagonal entry $a_{n+1} >0$. Also, by Theorem \ref{dn+1}, we have $\rm det(M) <0.$ Thus, the hypotheses of Theorem \ref{charinvf0} are satisfied. To conclude that $M$ is an inverse $F_0$-matrix, we only need to prove that ${\rm det} M(i:j)=0$ for all $i,j$ with $i \neq j$ such that $i+j=2k$, where $k$ is a positive integer. This condition, however follows from the fact that $M^{-1}$ is a tridiagonal matrix \cite[Theorem 8]{stuartczech}.
\end{proof}

We illustrate Theorem \ref{d_n+1f0}, by an example.

\begin{ex}
{\rm
 Consider the matrix $$M=
 \begin{pmatrix}
   -3 &   -3   & -3  &  -3\\  
   -3  &  -2  &  -2  &  -2 \\
   -3 &   -2  &  -1  &  -1 \\
   -3  &  -2   & -1   &  ~1
 \end{pmatrix}.$$ Then $M$ is a type $D$ matrix, satisfying the conditions of Theorem \ref{d_n+1f0}. It may verified that $M^{-1}$ is the tridiagonal matrix given by $$M^{-1}=
 \begin{pmatrix}
     ~~\frac{2}{3}  & -1      &   ~~0     &   ~ ~0\\
     -1 &   ~~2 &  -1    &     ~~0\\
     ~~0 &  -1 &   ~~\frac{3}{2} &  -\frac{1}{2}\\
     ~~0    &   ~~0  & -\frac{1}{2}  &  ~~\frac{1}{2}
 \end{pmatrix}=\begin{pmatrix}
    A & b\\
    c^T & d
\end{pmatrix},$$ 
where $A=\begin{pmatrix}
  ~~\frac{2}{3}  & -1      &   ~~0 \\  
   -1 &   ~~2 &  -1        \\
    ~~0 &  -1 &   ~~\frac{3}{2}
\end{pmatrix}$ is a $N_0$-matrix, $b=c=(0, 0, -\frac{1}{2})^T \leq 0,$ and $d=\frac{1}{2} \neq c^TA^{-1}b.$ Hence, $M^{-1}$ is an $F_0$-matrix.
}
\end{ex}

\section{Concluding remarks}
We have looked at the problem of extending results known for a class of invertible $Z$-matrices to the case, when the matrices are singular. This study has opened up the problem of seeking  generalizations involving the bigger class of all $Z$-matrices. Let us just point to a couple of interesting questions that were not investigated here. 
If $A=
 \begin{pmatrix}
   ~~3 &   -2   & -2  \\  
   -2  &  -1  &  -1  \\
   -2  &  -1   & ~~0  
 \end{pmatrix},$ then $A$ is not an $F_0$-matrix (since it has a negative diagonal) and $A^{-1}=\frac{1}{7}
 \begin{pmatrix}
   ~~1 &   -2   & ~~0  \\  
   -2  &  ~~4  &  -1  \\
   ~~0  &  -1   & ~~1  
 \end{pmatrix}.$ The comparison matrix of $A$ is given by 
 ${\cal M}_A=
 \begin{pmatrix}
   ~~3 &   -2   & -2  \\  
   -2  &  ~~1  &  -1  \\
   -2  &  -1   & ~~0  
 \end{pmatrix}.$ ${\cal M}_A$ is nonsingular and 
${{\cal M}_A}^{-1}=\frac{1}{15}
 \begin{pmatrix}
   ~~1 &   -2   & -4  \\  
   -2  &  ~~4  &  -7  \\
   -4  &  -7   & ~~1  
 \end{pmatrix}.$ We then have $${{\cal M}_A}^{-1} \leq |A^{-1}|,$$ an inequality that goes in the reverse direction to that of Ostrowski (Theorem \ref{ostr})! This motivates the question: Is this true, in general for the class of $Z$-matrices, which have at least one negative diagonal entry? Since the comparison matrix of an $F_0$-matrix is itself, the above inequality holds trivially for $F_0$-matrices, since for such matrices, we have 
 $${{\cal M}_A}^{-1} ={A}^{-1} \leq |A^{-1}|.$$

 The second question is whether the result of Fan (and its converse, namely that $A-I$ is an invertible $M$-matrix iff both $A$ and $I-A^{-1}$ are invertible $M$-matrices), holds good for $F_0$-matrices? More generally, is it true for the class of $Z$-matrices?\\

{\bf Acknowledgements:}

This work was done when the second author was visiting the Department of Mathematics, University of California, Santa Barbara during the Winter and Spring of 2023. The author records his thanks to Dr. Maria Isabel for the excellent hospitality that was extended to him. The first author acknowledges  funding  received  from  the Prime  Minister’s  Research  Fellowship  (PMRF),  Ministry  of  Education, Government of India, for carrying out this work.

\newpage

\end{document}